
\documentclass[preprint,12pt]{elsarticle}




\usepackage{amssymb}
\usepackage{amsmath}
\usepackage{amsthm}
\usepackage{epstopdf}
\usepackage{graphicx}
\usepackage{multirow}
\usepackage{times}
\usepackage{color}
\usepackage{makecell}
\usepackage{array}
\usepackage{booktabs}
\usepackage{threeparttable}
\usepackage{mathrsfs}
\usepackage{mathtools}

\theoremstyle{plain}
\newtheorem{theorem}{Theorem}[section]

\newtheorem{corollary}[theorem]{Corollary}
\newtheorem{proposition}[theorem]{Proposition}

\theoremstyle{definition}
\newtheorem{definition}[theorem]{Definition}

\theoremstyle{remark}
\newtheorem{remark}{Remark}


\journal{Omega}

\begin{document}

\begin{frontmatter}



\title{Preference Disaggregation Analysis with Criteria Selection in a Regularization Framework} 


\author[label1] {Kun Zhou}
\author[label1] {Zaiwu Gong \corref{fn1}}
\author[label2] {Guo Wei}
\author[label3,label4] {Roman S{\l}owi\'{n}ski}
\address[label1]{School of Management Science and Engineering, Nanjing University of Information Science and Technology, Nanjing 210044, China}
\address[label2]{The University of North Carolina at Pembroke, Pembroke, North Carolina 28372, USA}
\address[label3]{Institute of Computing Science, Pozna\'{n} University of Technology, Pozna\'{n} 60-965, Poland}
\address[label4]{Systems Research Institute, Polish Academy of Sciences, Warsaw 01-447, Poland}
\cortext[fn1]{Corresponding author \indent Email: zwgong26@163.com.}

\begin{abstract}
Limited by cognitive abilities, decision-makers (DMs) may struggle to evaluate decision alternatives based on all criteria in multiple criteria decision-making problems. This paper proposes an embedded criteria selection method derived from preference disaggregation technique and regularization theory. The method aims to infer the criteria and value functions used by the DM to evaluate decision alternatives. It measures the quality of criteria subsets by investigating both the empirical error (fitting ability of value functions to preference information) and generalization error (complexity of value functions). Unlike existing approaches that consider only the deviation from linearity as a measure of complexity, we argue that the number of marginal value functions also affects complexity. To address this, we use 0-1 variables to indicate whether a criterion is selected in the value function or not, and construct a criteria selection model with the trade-off between empirical and generalization errors as the objective function. If the criteria are sufficiently discriminative, we identify all supporting criteria sets that can restore preference information without unnecessary criteria. We further analyze the likelihood of criteria being selected by the DM. Finally, the effectiveness of the proposed method is demonstrated by applying it to an example of the green supplier selection problem.
\end{abstract}


\begin{keyword}
multiple criteria decision aiding; criteria selection; preference disaggregation; regularization; parsimonious preference representation; green supply chain management


\end{keyword}

\end{frontmatter}



\section{Introduction}
When evaluating decision alternatives (also called potential actions), real-world decision-making problems often require decision-makers (DMs) to assess the performance of alternatives from multiple points of view. Therefore, alternatives are typically evaluated based on a set of conflicting criteria. Multiple Criteria Decision Aiding (MCDA) provides numerous methods to assist DMs in handling such complex decision-making problems \citep{ref43,ref55}. The core idea is to characterize the DM's preferences by a mathematical model and then derive recommendations consistent with the preference model. Typically, such preference models are constructed or learned from some preference information provided by DMs \citep{HS_2024,ref61}. Preference information can be categorized into direct information and indirect information (also called decision examples).  Direct preference information is the set of values of model parameters such as trade-off weights and indifference thresholds, etc. This information requires DMs to have a thorough understanding of the parameters involved in preference models and their own preference systems \citep{ref46}. This places high demands on DMs' cognitive abilities and background knowledge. Indirect preference information is the holistic judgment of alternatives in terms of pairwise comparisons \citep{ref2,Zhou} or assignments to decision classes \citep{ref46,ref1}. This information requires less cognitive effort from DMs and avoids the need for DMs to directly specify preference model parameters and explain the reasons behind their values \citep{ref56}. 

Preference disaggregation technique \citep{ref38,ref12,ref3}, widely recognized in MCDA community, comprises a general framework for inferring DMs' preference models on the basis of decision examples. This technique aims to find a rational basis that supports the DM to give such decision examples. Preference disaggregation technique establishes a connection between decision examples and mathematical preference models by regression-like methods. The preference model that best fits the decision examples is selected to represent the DM's preference system. The preference models can take various forms, including value function \citep{ref2,ref13,ref14}, outranking relation \citep{buchong3,buchong9} or a set of decision rules \citep{buchong2}. Disaggregation technique shares the same philosophy as machine learning, both considering the problem of learning a (decision/prediction) model from input information \citep{buchong10}. The existing disaggregation methods are typically developed with the assumption that DMs evaluate alternatives using all involved criteria. However, cognitive theory suggests that humans exhibit selective attention when comparing things \citep{ref16}. Cognitive limitations result in an increased cognitive burden to consider all relevant points of view when evaluating alternatives \citep{buchong12}, so humans cannot focus on all details. Instead, they concentrate on the aspects they care about \citep{buchong12}. Therefore, if there are many criteria, it cannot be arbitrarily assumed that DMs evaluate alternatives using all criteria. Consequently, there is a need for an efficient and interpretable method to infer, from DMs' preference information, the criteria they select for evaluating alternatives as well as the preference model that aggregates these criteria.

In this paper, we adopt additive value function as the preference model and propose an intuitive and interpretable embedded criteria selection method based on preference disaggregation technique and regularization theory. The philosophy of this method is to determine a value function that is as ``simple'' as possible while maintaining its ability to fit decision examples. A ``simple'' value function can effectively avoid over-fitting, which refers to good fitting ability to reference alternatives but poor generalization ability to non-reference alternatives \citep{ref41}. We use the fitting ability (empirical error) and complexity (generalization error) of value functions as the quality measures of criteria subsets to guide criteria selection. Unlike existing literature which interprets the complexity of value functions solely as the degree of deviation of  marginal value functions from linearity \citep{ref1,ref6}, we argue that the complexity of value functions consists of two components: 1) the degree of deviation of marginal value functions from linearity and 2) the number of marginal value functions that compose the value function. In fact, reducing the number of criteria can also enhance the generalization ability of the learned model, which has been demonstrated in machine learning \citep{ref17}. We introduce 0-1 variables to indicate whether a criterion is selected or not, and the marginal value functions are multiplied by the corresponding 0-1 variables and then summed to form a new value function. The complexity of such a value function is jointly measured by the difference in slopes of marginal value functions in adjacent subintervals \citep{ref42} and the sum of all 0-1 variables.

The more criteria we select, the higher the degree of freedom of the value function constructed based on these criteria, and the higher the value function ability to fit decision examples. However, this increases the complexity of the value function. To address the issue of parsimonious preference representation, we balance the empirical error and generalization error of the value function to form a general regularized loss function. The decision examples are converted into constraints that the value function must satisfy, together with monotonicity and normalization conditions, to construct a criteria selection model. This is a nonlinear mixed-integer programming problem. To efficiently solve this problem, we transform it into an equivalent linear form. This model simultaneously performs criteria selection and learning of the value function, and the learned value function maintains a certain level of fitting ability to decision examples while having a simpler form.

If the involved criteria are sufficiently discriminative, there exists at least one subset of criteria enabling the construction of a value function being compatible with all decision examples. Such a subset of criteria is referred to as supporting criteria set for the preference information. It is evident that adding a new criterion to a supporting criteria subset still results in a supporting criteria set. However, such augmented supporting criteria set is not what we desire, as it contains unnecessary criteria. To determine all supporting criteria sets without unnecessary criteria (referred as streamlined supporting criteria set), we first construct a criteria selection model with the generalization error as the objective function to find the smallest streamlined supporting criteria set. Subsequently, an iterative method is developed to seek all possible streamlined supporting criteria sets. Based on all streamlined supporting criteria sets, we further define core criteria which are present in all streamlined supporting criteria sets, redundant criteria which are not included in any streamlined supporting criteria sets, and criteria relevance which is measured by the number of streamlined supporting criteria sets that include a particular criterion. They reveal the likelihood that these criteria are mainly selected by the DM.

The remainder of this article is organized as follows: In Section \ref{review}, an overview of the related work is presented. Section \ref{UTAreview} briefly introduces the UTA method, which is the best-known preference disaggregation method. In Section \ref{new value function}, we rewrite the additive value function by introducing 0-1 variables and study the complexity of the novel value function. At the same time, some important concepts and definitions are also introduced for ease of subsequent discussion. Criteria selection models for the two scenarios of consistent and inconsistent preference information are elaborated in Section \ref{criteria selection model}. In Section \ref{case study}, an example of green supplier selection problem is provided to validate the effectiveness of the proposed criteria selection method. Section \ref{conclusion} draws conclusions.

\section{Literature Review}\label{review}
\subsection{The disaggregation methods with a value function as the preference model}
Among various disaggregation methods, those using value functions as preference models are the most prevalent. As a typical representative, the UTA method \citep{ref2,ref4} uses an additive value function as the preference model and approximates the additive value function through a piecewise linear function to facilitate the inference of the preference model. By using a regression-like method, the decision examples are converted into constraints that the parameters of value functions need to satisfy, and an optimization process is used to select the value function with the highest degree of restoration to the decision examples as the preference model. However, the UTA method requires the value function to be monotonic and the criteria to be mutually independent, which may not be suitable for certain decision scenarios. Some methodologies are then proposed to address these issues. \citet{buchong4} proposed a method based on an evolutionary optimization approach to infer non-monotonic value functions. \citet{ref7} developed a new framework for preference disaggregation to infer non-monotonic value functions. This approach first constructs a linear programming model to determine a value function that is not normalized. Subsequently, a mapping that preserves the order relations is developed to transform this value function into a normalized one. Regarding the interactions among criteria, \citet{ref13} utilized the Choquet integral as a preference model to extend the UTA method. Apart from decision examples, the newly proposed method allows the DM to provide information about the interactions among criteria. \citet{buchong5} added positive and negative synergy terms to the additive value function to depict the positive and negative interactions of criteria respectively, proposing a novel disaggregation method. This method does not require the evaluation of criteria to be expressed on the same scale. Furthermore, \citet{ref14} introduced a compensatory value function by dividing the trade-off mechanism between criteria into independent and dependent parts. Such value function can simultaneously handle the importance of criteria, interactions between criteria and the DM's risk tolerance. 

A common principle shared by the aforementioned methods is to emphasize the fitting ability of the value function to decision examples, which may lead to a complex value function. However, the generalization ability of complex value functions on non-reference alternatives is poor. Therefore, we need to strike a balance between the fitting ability of the value function to the decision examples and its complexity. Regularization theory is an important methodology in machine learning, used to address the trade-off between model complexity and generalization performance \citep{ref21}. Its core operation is to construct a penalty term (also called regularization term) related to the model complexity based on the model parameters, and incorporate the constructed penalty term as a part of the objective function. \citet{ref41} extended the UTADIS method \citep{buchong6} by utilizing the $L_{1}$ norm of the parameters of the value function as a penalty term for model complexity. In the extension, both the empirical error and complexity of the value function are considered, and the performance of such extension UTADIS method has been proven superior to UTADIS through computational experiments. \citet{buchong7} introduced a regularization term based on the $L_{2}$ norm to extend the additive value function and discusses how the optimization process can be expressed in a kernel form. \citet{ref6} argued that the closer the value function is to linearity, the simpler it is, and they regard the difference in slopes of marginal value functions in adjacent subintervals as a penalty term for model complexity.

In this paper, the complexity of the value function is divided into two parts: one is the degree to which the marginal value function deviates from linearity, and the other is the number of marginal value functions that constitute the value function. The regularization term constructed in this way not only controls the degree to which the value function deviates from linearity, but also possesses the function of criteria selection. 

\subsection{Existing methods for dealing with irrelevant criteria}
The problems of irrelevant criteria (also called features or attributes) have been extensively studied in machine learning \citep{ref17,ref18} (known as feature selection) and rough set theory \citep{ref19,ref24,ref59,ref20}) (known as attribute reduction). In machine learning, preprocessing data by feature selection methods can reduce data dimentions mitigating the curse of dimensionality \citep{ref21}. Concerning different selection strategies, feature selection methods can be broadly categorized as filter, wrapper, and embedded methods. Filter methods are independent of subsequent learning algorithms, and they assess the relevance of features based on the inherent characteristics of data \citep{ref27,ref22}. A typical filter method consists of two steps: 1) ranking features according to some feature evaluation criteria; and 2) filtering out low-ranked features. Filter methods possess high computational efficiency, but the absence of guidance from learning algorithms in feature selection results in poorer performance of learned models. Wrapper methods rely on the predictive performance of learning algorithms to evaluate the quality of selected features and consist of two operations: 1) selecting a feature subset according to certain strategy; and 2) using a learning algorithm as a black box to evaluate the quality of the selected feature subset based on learning performance. These operations are repeated until the optimal feature subset is found or a stopping condition is met \citep{ref28,ref31}. The feature subset determined by the wrapper methods gives the highest learning performance, but this comes at the cost of exponential computational and storage requirement (the search space for $d$ features is $2^{d}$). The embedded method is a tradeoff between filter and wrapper methods by embedding feature selection into the learning algorithm, simultaneously selecting features and learning model  \citep{ref33}. While retaining interaction with learning algorithm, this method avoids enumerating all feature subsets like wrapper methods. Representative embedded methods are based on sparse regularization models \citep{ref35,ref36}. These three feature selection methods are not suitable for MCDA due to three reasons  \citep{ref38}: 1) these methods are all model-oriented, focusing primarily on the performance of the developed models, and this often comes at the cost of complexity of both methodology and model, making them difficult for DMs to understand; 2) machine learning and MCDA have their respective application areas, where the former primarily deals with data-intensive problems, while the latter confronts small-scale data, emphasizing the active participation of the DM. This results in the above three feature selection methods being unable to be directly applied to decision problems; and 3) these methods lack interaction with DMs and only use initial statistical samples as an input, however, in decision-making problems, DMs' preference information may change in the course of interaction as DMs gain a deeper understanding of the feedback between the provided preference information and the resulting recommendation (for example, DMs may add, change or delete their preference information if this information is found to be inconsistent or the recommendation is not satisfactory).

Attribute reduction is the main task in rough set theory, aiming to remove redundant attributes while maintaining the classification ability of the information table \citep{ref58,ref23}. This method is essentially a data preprocessing process, which completes attribute reduction based on the inherent characteristics of the input information (decision table). The key step is to check which attributes are necessary to represent classification without increased inconsistency. It is known that determining all the reducts is NP-hard \citep{ref24}. Therefore, genetic algorithms are often used for attribute reduction. According to the adopted measure of attribute relevance, attribute reduction methods can be roughly divided into three categories: 1) attribute reduction methods based on positive region \citep{ref26,ref29}; 2) attribute reduction methods based on discernibility matrix \citep{ref30,ref37}; and 3) attribute reduction methods utilizing entropy \citep{ref40,ref39}. There are two limitations in attribute reduction methods that restrict its application in MCDA: 1) attribute reduction is a data preprocessing process that, similar to filter methods, is independent of model learning algorithms; 2) attribute reduction methods lack interaction with DMs, taking the initial given information as input only, and usually not allowing DMs to change information. These limitations have been overcome, however, in case of MCDA using as preference model a set of ``\textit{if..., then...'}'decision rules presenting relationships between values on relevant attributes and the decision \citep{ref58}. In this case, the attribute reduction is performed together with rule generation, taking for the rules the minimal subsets of relevant attributes for a given decision \citep{ref60}. 

\section{A brief reminder of the UTA method}\label{UTAreview}
UTA (UTilit\'{e}s Additives) is a preference disaggregation method using an additive value function as the preference model \citep{ref2}. In the framework of UTA, it is assumed that the performance scores (ordinal or cardinal scales) of alternatives on a finite set of criteria are known, and the DM is able to provide decision examples concerning a subset of alternatives called reference alternatives.

Assuming that alternatives $a_{i}, \ i\in N=\{1,2,\cdots,n\}$ are evaluated on criteria $c_{j},\ j\in M=\{1,2,\cdots,m\}$, let $\mathcal{A}=\{a_{i}\mid i\in N\}$ and $\mathcal{C}=\{c_{j}\mid j\in M\}$. The function $g_{j}(\cdot):\mathcal{A}\rightarrow[\underline{g_{j}},\overline{g_{j}}]$ gives performance scores to alternatives on criterion $c_{j}$, with $[\underline{g_{j}},\overline{g_{j}}]$ being the evaluation scale. Typically, $\underline{g_{j}}=\min\limits_{a_{i}\in \mathcal{A}}g_{j}(a_{i})$,  $\ \overline{g_{j}}=\max\limits_{a_{i}\in \mathcal{A}}g_{j}(a_{i})$. The UTA method uses the following additive value function to aggregate criteria scores:
\begin{equation}\label{value function}
  u(a_{i})=\sum\limits_{j=1}^{m}u_{j}(g_{j}(a_{i})),
\end{equation}
where $u_{j}(\cdot):[\underline{g_{j}},\overline{g_{j}}]\rightarrow[0,1]$ is the marginal value function of $c_{j}$, which is a piecewise linear function with characteristic points $g_{j}^{1},g_{j}^{2},\cdots,g_{j}^{\gamma_{j}-1}$ \citep{ref2}. These characteristic points divide $[\underline{g_{j}},\overline{g_{j}}]$ into $\gamma_{j}$ equal subintervals. Let $g_{j}^{0}=\underline{g_{j}},\ g_{j}^{\gamma_{j}}=\overline{g_{j}}$.

For $g_{j}(a_{i})\in [g_{j}^{t},g_{j}^{t+1}]$, we have
\begin{equation}\label{interpolation}
 \begin{split}
    u_{j}(g_{j}(a_{i}))&=u_{j}(g_{j}^{t})+\frac{g_{j}(a_{i})-g_{j}^{t}}{g_{j}^{t+1}-g_{j}^{t}}\times (u_{j}(g_{j}^{t+1})-u_{j}(g_{j}^{t}))\\
      &=u_{j}(g_{j}^{0})+\sum\limits_{s=1}^{t}\triangle u_{j}^{s}+\frac{g_{j}(a_{i})-g_{j}^{t}}{g_{j}^{t+1}-g_{j}^{t}}\times \triangle u_{j}^{t+1},
   \end{split}
\end{equation}
where $\triangle u_{j}^{s}=u_{j}(g_{j}^{s})-u_{j}(g_{j}^{s-1})$ is the value difference of $[g_{j}^{s-1},g_{j}^{s}]$. The value function $u(\cdot)$ needs to satisfy monotonicity and normalization conditions:
\begin{itemize}
  \item \textbf{Monotonicity conditions:} $\ \triangle u_{j}^{s}\geq\rho\geq0,\ s=1,2,\cdots,\lambda_{j},\ j\in M$.
  \item \textbf{Normalization conditions:} $\begin{cases}
          u_{j}(g_{j}^{0})=0,\ j\in M,\\
          \sum\limits_{j=1}^{m}\sum\limits_{s=1}^{\lambda_{j}}\triangle u_{j}^{s}=1,
         \end{cases}$
\end{itemize}
where the parameter $\rho$ is used to control the magnitude of value difference, and is typically set to $0$. It can be seen that $u_{j}(\cdot)$  is determined by $\triangle u_{j}^{s},\ s=1,2,\cdots,\lambda_{j}$, which constitute the characteristic vector of $u_{j}(\cdot)$:
\begin{equation}
  \textbf{V}_{j}=(\triangle u_{j}^{1},\triangle u_{j}^{2},\cdots,\triangle u_{j}^{\lambda_{j}})^{\textrm{T}}.
\end{equation}
$\textbf{I}^{\textrm{T}}\times\textbf{V}_{j}$ is the weight of $c_{j}$, where $\textbf{I}$ is a column vector with all elements equal to $1$.

As a representation of the DM's preference system, the following relationship exists between $u(\cdot)$ and the DM's holistic judgement (preference ``$\succ$'' and indifference ``$\sim$''):
\begin{equation}
\left\{
\begin{aligned}
  a_{i}\succ a_{j} &\Leftrightarrow u(a_{i})>u(a_{j}), \\
  a_{i}\sim a_{j} &\Leftrightarrow u(a_{i})=u(a_{j}).
\end{aligned}
\right.
\end{equation}

Let $\textbf{A}_{j}(a_{i})=(\delta_{j}^{1}(a_{i}),\cdots,\delta_{j}^{\lambda_{j}}(a_{i}))^{\textrm{T}}$, for $p=1,\cdots,\lambda_{j}$,
\begin{equation}
  \delta_{g_{j}}^{p}(a_{i})=\left\{\begin{array}{ll}
                   1, & if~\ g_{j}(a_{i})> g_{j}^{p}, \\
                   \frac{g_{j}(a_{i})-g_{j}^{p-1}}{g_{j}^{p}-g_{j}^{p-1}}, & if~\ g_{j}^{p-1}\leq g_{j}(a_{i})\leq g_{j}^{p},\\
                   0, & if~\ g_{j}(a_{i})<g_{j}^{p-1}.
                 \end{array}\right.
\end{equation}
The marginal value function $u_{j}(\cdot)$ can be expressed as the inner product of two vectors as follows \citep{ref44}:
\begin{equation}
  u_{j}(g_{j}(a_{i}))=\textbf{V}_{j}^{\textrm{T}}\times \textbf{A}_{j}(a_{i}).
\end{equation}
Let $\textbf{V}=(\textbf{V}_{1}^{\textrm{T}},\cdots,\textbf{V}_{m}^{\textrm{T}})^{\textrm{T}},\ \textbf{A}(a_{i})
=(\textbf{A}_{1}^{\textrm{T}}(a_{i}),\cdots,\textbf{A}_{m}^{\textrm{T}}(a_{i}))^{\textrm{T}}$. Then, $u(a_{i})$ can be expressed in the following form:
\begin{equation}
  u(a_{i})=\textbf{V}^{\textrm{T}}\times \textbf{A}(a_{i}).
\end{equation}
$\textbf{V}$ is the parameter vector of $u(\cdot)$ and has the following form:
\begin{equation}
\left(\overbrace{\triangle u_{1}^{1},\cdots,\triangle u_{1}^{\lambda_{1}}}^{c_{1}},\cdots,\overbrace{\triangle u_{j}^{1},\cdots,\triangle u_{j}^{\lambda_{j}}}^{c_{j}},\cdots,\overbrace{\triangle u_{m}^{1},\cdots,\triangle u_{m}^{\lambda_{m}}}^{c_{m}}\right)^{\textrm{T}}.
\end{equation}
$\textbf{A}(a_{i})$ is a constant vector.

To determine the value function that best fits the DM's preference information,  overestimation error $\sigma_{+}(a_{i})$ and underestimation error $\sigma_{-}(a_{i})$ were introduced by \citet{ref4} to modify the additive value function as follows:
\begin{equation}
  u^{\prime}(a_{i})=u(a_{i})+\sigma_{+}(a_{i})-\sigma_{-}(a_{i}).
\end{equation}
Then, the following linear program is constructed:
\begin{align}\label{UTA}
\begin{array}{ll}
 \text{minimize}   & F=\sum\limits_{a_{i}\in A^{R}}(\sigma_{+}(a_{i})+\sigma_{-}(a_{i}))\\
 \text{subject to} & \begin{cases}
 E^{C}:\begin{cases}
 1(b):\triangle u_{j}^{s}\geq\rho\geq0,\ s=1,2,\cdots,\lambda_{j},\ j\in M,\\
 2(b):\begin{cases}
 u_{j}(g_{j}^{0})=0,\ j\in M,\\
 \sum\limits_{j=1}^{m}\sum\limits_{s=1}^{\lambda_{j}}\triangle u_{j}^{s}=1,
 \end{cases}
 \end{cases}\\
 E^{A^{R}}:\begin{cases}
 u^{\prime}(a_{p})=u^{\prime}(a_{q}),~\mbox{for}~\ a_{p},a_{q}\in A^{R}\ ~\&~\ a_{p}\sim a_{q},\\
 u^{\prime}(a_{p})\geq u^{\prime}(a_{q})+\varepsilon,~\mbox{for}~\ a_{p},a_{q}\in A^{R}\ ~\&~\ a_{p}\succ a_{q},\\
 \sigma_{+}(a_{p}),\sigma_{+}(a_{q}),\sigma_{-}(a_{p}),\sigma_{-}(a_{q})\geq0,~\mbox{for}~\ a_{p},a_{q}\in A^{R},
 \end{cases}\\
 \varepsilon>0,
\end{cases}
\end{array}
\end{align}
where $\varepsilon$ is an auxiliary variable used to convert strict inequalities into weaker ones. The set $A^{R}$ consists of reference alternatives on which the DM provides preference information. The objective function $F=\sum\limits_{a_{i}\in A^{R}}(\sigma_{+}(a_{i})+\sigma_{-}(a_{i}))$ measures the fitting ability of the value function to the preference information. If the optimal value $F^{\ast}$ equals $0$,  there exists a value function that can restore all preference information (Such value function is called compatible). If $F^{\ast}>0$, there are inconsistencies in the preference information, and the optimal solution of model (\ref{UTA}) is the value function with the minimum empirical error. To identify the pieces of preference information that lead to inconsistency, an integer programming method proposed by \citet{ref5} can be adopted. In fact, as early as 1981, Zionts et al.
\citep{buchong1,buchong11} developed a discrete multi-criteria decision method with a similar philosophy to the UTA method, which was utilized for addressing energy planning issues.

\section{Rewriting the additive value function for criteria selection}\label{new value function}
In this section, we introduce 0-1 variables to reformulate the additive value function for criteria selection and explore the complexity of the novel value function. Additionally, for the convenience of subsequent discussion, some important concepts and definitions are also provided.

By introducing 0-1 variable $\delta_{j}$ to identify whether $c_{j}$ is selected or not, i.e., whether the DM evaluates alternatives considering $c_{j}$,  the value function $u(\cdot)$ is rewritten as follows:
\begin{equation}\label{rewrite value function}
\begin{split}
  u_{\ast}(a_{i}) & =\sum\limits_{j=1}^{m}\delta_{j}\times u_{j}(g_{j}(a_{i})) \\
    & =\sum\limits_{j=1}^{m}\delta_{j}\times \textbf{V}_{j}^{\textrm{T}}\times \textbf{A}_{j}(a_{i}).
\end{split}
\end{equation}
If $\delta_{j}=0$,  the performance of alternatives on $c_{j}$  does not generate any value, and the DM does not take $c_{j}$ into account when evaluating alternatives. If $\delta_{j}=1$, the marginal value function $u_{j}(\cdot)$ corresponding to $c_{j}$ is a non-zero component of the value function $u_{j}(\cdot)$, indicating that the DM will take into account the performance of the alternatives on $c_{j}$ when evaluating them. It can be seen that $u_{\ast}(\cdot)$ is jointly determined by $\delta_{j},\ j\in M,$ and $\textbf{V}_{j}^{\textrm{T}},\ j\in M$ . The former indicates whether $c_{j}$  is considered by the DM, while the latter characterizes the shape of $u_{j}(\cdot)$.

The value function $u_{\ast}(\cdot)$ needs to satisfy both monotonicity and normalization:
\begin{itemize}
  \item \textbf{Monotonicity conditions:} $\triangle u_{j}^{s}\geq0,\ s=1,2,\cdots,\lambda_{j},\ j\in M.$
  \item \textbf{Normalization conditions:} $\begin{cases}
          u_{j}(g_{j}^{0})=0,\ j\in M,\\
          \sum\limits_{j=1}^{m}\delta_{j}\times\textbf{V}_{j}^{\textrm{T}}\times\textbf{I}=1.
         \end{cases}$
\end{itemize}
The method proposed in this paper requires parameter $\rho$ to be set to $0$. Recently, some scholars have extended the UTA method to the case of non-monotonic value functions \citep{ref6,ref7,ref8}. However, to highlight the research content of this paper, we do not present our methodology in the context of non-monotonic value functions.

\begin{definition}\label{set-based}
The value function $u(\cdot)$ is said to be constructed over $\mathcal{C}_{t}\subseteq \mathcal{C}$ ($\mathcal{C}_{t}$-based) if it is composed of marginal value functions corresponding to the criteria in $\mathcal{C}_{t}$. In this case, $u(\cdot)$ has the following form:
\begin{equation}
  u(\cdot)=\sum\limits_{\{j|c_{j}\in \mathcal{C}_{t}\}}\textbf{V}_{j}^{\textrm{T}}\times \textbf{A}_{j}(\cdot).
\end{equation}
\end{definition}
For example, $u^{\star}(\cdot)=\textbf{V}_{1}^{\textrm{T}}\times \textbf{A}_{1}(\cdot)+\textbf{V}_{3}^{\textrm{T}}\times \textbf{A}_{3}(\cdot)+\textbf{V}_{4}^{\textrm{T}}\times \textbf{A}_{4}(\cdot)$ is $\{c_{1},c_{3},c_{4}\}$-based value function, and $u^{\star\star}(\cdot)=\textbf{V}_{2}^{\textrm{T}}\times \textbf{A}_{2}(\cdot)+\textbf{V}_{5}^{\textrm{T}}\times \textbf{A}_{5}(\cdot)+\textbf{V}_{6}^{\textrm{T}}\times \textbf{A}_{6}(\cdot)$ is $\{c_{2},c_{5},c_{6}\}$-based value function. Furthermore, $\{c_{1},c_{3},c_{4}\}$ is a proper subset of $\{c_{1},c_{2},c_{3},c_{4}\}$, and $u^{\star}(\cdot)$ can also be considered as $\{c_{1},c_{2},c_{3},c_{4}\}$-based.  In this case, $u^{\prime}(\cdot)=\textbf{V}_{1}^{\textrm{T}}\cdot \textbf{A}_{1}(\cdot)+\textbf{V}_{2}^{\textrm{T}}\times \textbf{A}_{2}(\cdot)+\textbf{V}_{3}^{\textrm{T}}\times \textbf{A}_{3}(\cdot)+\textbf{V}_{4}^{\textrm{T}}\times \textbf{A}_{4}(\cdot)$, where $\textbf{V}_{2}=\textbf{0}$ ($\textbf{0}$ a column vector with all elements being zero).
\begin{definition}\label{degenerate}
Given a value function $u(\cdot)=\sum\limits_{\{j|c_{j}\in \mathcal{C}_{t}\}}\textbf{V}_{j}^{\textrm{T}}\times \textbf{A}_{j}(\cdot)$. If there exists $c_{j}\in\mathcal{C}_{t}$ such that $\textbf{V}_{j}=\textbf{0}$, $u(\cdot)$ is said to be degenerate.
\end{definition}
In the above examples, $u^{\star}(\cdot)=\textbf{V}_{1}^{\textrm{T}}\cdot \textbf{A}_{1}(\cdot)+\textbf{V}_{2}^{\textrm{T}}\cdot \textbf{A}_{2}(\cdot)+\textbf{V}_{3}^{\textrm{T}}\cdot \textbf{A}_{3}(\cdot)+\textbf{V}_{4}^{\textrm{T}}\cdot \textbf{A}_{4}(\cdot)$  is degenerate. For the degenerate value function, the performance of alternatives of the criteria corresponding to $\textbf{V}_{j}=\textbf{0}$ is of no value to DMs.

\begin{definition}\label{supporting}
Given a criteria subset $\mathcal{C}_{k}\subseteq\mathcal{C}$, if there exists a $\mathcal{C}_{k}$-based value function that restores the preference information, $\mathcal{C}_{k}$ is referred to as the supporting criteria set for this preference information.
\end{definition}
The concept of supporting criteria set is extremely important. Criteria in the supporting criteria set are sufficiently discriminatory relative to preference information and the DM may evaluate alternatives considering only these criteria. Typically, there may be multiple supporting criteria sets.

\begin{proposition}\label{gg}
Given two criteria subsets, $\mathcal{C}_{k}$ and $\mathcal{C}_{t}$, satisfying $\mathcal{C}_{k}\neq\emptyset$ and $\mathcal{C}_{k}\subseteq\mathcal{C}_{t}$. If $\mathcal{C}_{k}$ is a supporting criteria set for preference information, $\mathcal{C}_{t}$ is also a supporting criteria set.
\end{proposition}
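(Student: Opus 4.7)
The plan is to prove Proposition~\ref{gg} by explicit construction: starting from a compatible $\mathcal{C}_k$-based value function, I will extend it to a $\mathcal{C}_t$-based one by adjoining trivial (zero) marginal value functions on the criteria in $\mathcal{C}_t\setminus\mathcal{C}_k$, and then verify that this extension still satisfies monotonicity, normalization, and the preference constraints.

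First, since $\mathcal{C}_k$ is supporting, Definition~\ref{supporting} gives a $\mathcal{C}_k$-based value function
\[
u(\cdot)=\sum_{\{j\mid c_j\in\mathcal{C}_k\}}\textbf{V}_j^{\textrm{T}}\times\textbf{A}_j(\cdot)
\]
with parameter vectors $\textbf{V}_j$ satisfying the monotonicity and normalization conditions of Section~\ref{UTAreview} and restoring the given holistic preferences (indifference and strict preference) on $A^R$. I would then define a new value function $\tilde u(\cdot)$ on $\mathcal{C}_t$ by keeping $\textbf{V}_j$ for $c_j\in\mathcal{C}_k$ and setting $\tilde{\textbf{V}}_j=\textbf{0}$ for every $c_j\in\mathcal{C}_t\setminus\mathcal{C}_k$, so that
\[
\tilde u(\cdot)=\sum_{\{j\mid c_j\in\mathcal{C}_t\}}\tilde{\textbf{V}}_j^{\textrm{T}}\times\textbf{A}_j(\cdot).
\]
By Definition~\ref{set-based}, $\tilde u$ is $\mathcal{C}_t$-based (and, by Definition~\ref{degenerate}, degenerate whenever $\mathcal{C}_k\subsetneq\mathcal{C}_t$).

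Next I would check the required conditions step by step. For monotonicity, each $\triangle\tilde u_j^s$ is either one of the original nonnegative differences (for $c_j\in\mathcal{C}_k$) or equals zero (for $c_j\in\mathcal{C}_t\setminus\mathcal{C}_k$), so $\triangle\tilde u_j^s\ge 0$ holds throughout. For normalization, $\tilde u_j(g_j^0)=0$ holds for every $j$, and the added zero vectors contribute nothing to $\sum_j \tilde{\textbf{V}}_j^{\textrm{T}}\times\textbf{I}$, which therefore still equals $1$. For preference restoration, the zero components add $0$ to every $\tilde u(a_i)$, so $\tilde u(a_i)=u(a_i)$ for all $a_i\in A^R$; hence $\tilde u$ preserves exactly the same indifference and strict preference relations as $u$. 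Thus $\tilde u$ is a $\mathcal{C}_t$-based compatible value function, and $\mathcal{C}_t$ is a supporting criteria set by Definition~\ref{supporting}.

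There is essentially no hard step here: the construction is forced by the definitions, and the only thing one must be careful about is that the normalization sum involves the indicator variables $\delta_j$ (or, equivalently in the $\mathcal{C}_t$-based form, only the retained components), so padding with zero vectors does not disturb the constraint $\sum_j \delta_j\,\textbf{V}_j^{\textrm{T}}\times\textbf{I}=1$. The proposition is really a sanity check confirming that the notion of ``supporting'' is monotone with respect to set inclusion, which is precisely why the paper subsequently needs to refine it into the notion of a \emph{streamlined} supporting criteria set in order to exclude unnecessary criteria.
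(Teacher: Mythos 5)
Your proof is correct and follows essentially the same route as the paper's own argument in \ref{A1}: extend the compatible $\mathcal{C}_k$-based value function by setting $\textbf{V}_j=\textbf{0}$ on $\mathcal{C}_t\setminus\mathcal{C}_k$, then verify monotonicity, normalization, and that the comprehensive values of all reference alternatives are unchanged. Your added checks (that the padded function is degenerate when the inclusion is proper, and that the zero vectors do not disturb the normalization sum) are consistent with the paper and require no correction.
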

\begin{proof}
See \ref{A1}.
\end{proof}

\begin{corollary}\label{ff}
If $\mathcal{C}$ is not a supporting criteria set for the preference information, any criteria subset of $\mathcal{C}$ is also not a supporting criteria set. In this case, the preference information is partially inconsistent.
\end{corollary}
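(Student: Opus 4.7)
The plan is to argue by contraposition using Proposition \ref{gg}. First, I would suppose, for the sake of contradiction, that there exists a non-empty subset $\mathcal{C}_{k} \subseteq \mathcal{C}$ which is a supporting criteria set for the preference information. Then, since $\mathcal{C}_{k} \subseteq \mathcal{C}$ with $\mathcal{C}_{k} \neq \emptyset$, Proposition \ref{gg} applies with $\mathcal{C}_{t} = \mathcal{C}$ and yields at once that $\mathcal{C}$ is itself a supporting criteria set. This directly contradicts the hypothesis, so no non-empty subset of $\mathcal{C}$ can be supporting. Formally, I would package this as: if any $\mathcal{C}_{k} \subseteq \mathcal{C}$ were supporting, then by monotonicity of the ``supporting'' property (Proposition \ref{gg}) $\mathcal{C}$ would be too; contrapositively, $\mathcal{C}$ not being supporting forces every subset to fail.

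For the second sentence of the corollary — that the preference information is partially inconsistent — I would appeal directly to Definition \ref{supporting} together with the UTA setup recalled in Section \ref{UTAreview}. Since $\mathcal{C}$ is not supporting, there is no $\mathcal{C}$-based value function (satisfying the monotonicity and normalization conditions of Section \ref{new value function}) that restores the DM's holistic judgements. Translating this into the linear program (\ref{UTA}), it means the optimal value $F^{\ast}$ is strictly positive, which the excerpt itself identifies as the marker of inconsistency in the preference information. So this part reduces to quoting Definition \ref{supporting} and the interpretation of $F^{\ast} > 0$ already established.

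The main obstacle — really the only subtlety — is handling the empty subset cleanly. Proposition \ref{gg} is stated under $\mathcal{C}_{k} \neq \emptyset$, and this is genuinely needed, because the empty criteria set cannot satisfy the normalization condition $\sum_{j} \delta_{j}\, \mathbf{V}_{j}^{\textrm{T}} \mathbf{I} = 1$, and hence is trivially not supporting anyway. I would therefore explicitly restrict the statement of the corollary to non-empty subsets (or remark that $\emptyset$ is vacuously not supporting), so that the contrapositive application of Proposition \ref{gg} is airtight. Beyond this minor bookkeeping, the proof is a one-line contrapositive and requires no new constructions.
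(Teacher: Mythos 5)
Your argument is correct and matches the paper's intent exactly: the corollary is stated without a separate proof precisely because it is the contrapositive of Proposition \ref{gg}, which is the one-line argument you give, and your handling of the empty set and of the link to $F^{\ast}>0$ in model (\ref{UTA}) is sound bookkeeping rather than a departure. Nothing further is needed.
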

If $\mathcal{C}_{k}$ is a proper subset of $\mathcal{C}_{t}$ and both are supporting criteria sets of preference information, then the performances of alternatives on the criteria from $\mathcal{C}_{k}$ are sufficient to restore the preference information given by the DM, while the criteria from $\mathcal{C}_{t}\setminus\mathcal{C}_{k}$ are considered unnecessary.
\begin{definition}\label{reduction}
Assuming that $\mathcal{C}_{k}$ is a supporting criteria set, if no proper subset of $\mathcal{C}_{k}$ is supporting criteria set, $\mathcal{C}_{k}$ is referred to as streamlined supporting criteria set.
\end{definition}
The streamlined supporting criteria set does not contain unnecessary criteria. Following the Occam's Razor Law, we only focus on such supporting criteria set when selecting criteria.

If the DM's preference information is inconsistent, according to Corollary (\ref{ff}), it is impossible to construct a compatible value function based on any criteria subset to restore this preference information. In such cases, we focus on the fitting ability of the value function to the preference information. The more criteria included in $\mathcal{C}_{k}$, the higher the degree of freedom of $\mathcal{C}_{k}$-based value functions, and the better their fitting ability to preference information.

\begin{remark}\label{aa}
Assume that $\mathcal{C}_{k}$ is a non-empty and proper subset of $\mathcal{C}_{t}$. Then, there exists a $\mathcal{C}_{t}$-based value function whose empirical error is not greater than the empirical error of any $\mathcal{C}_{k}$-based value function.
\end{remark}

The validity of Propositions \ref{gg} and Remark \ref{aa} requires that $\rho=0$, i.e., the value difference of subintervals is allowed to be $0$. In this case, the optimal solution of model (\ref{UTA}) may determine a degenerate value function, where some marginal value function is $0$. While this indeed achieves the purpose of criteria selection, we cannot guarantee that this situation always occurs.

Next, we discuss the complexity of $u_{\ast}(\cdot)$. Using notation from statistical learning, $\Omega(u_{\ast})$ represents the complexity of $u_{\ast}$. In the literature, the deviation of $u_{\ast}(\cdot)$  from linearity is used as a measure of complexity, which is characterized by the difference in slopes of $u_{j}(\cdot)$ between adjacent subintervals \citep{ref7}. Since $[\underline{g_{j}},\overline{g_{j}}]$ is equally divided into $\lambda_{j}$ subintervals, the difference in slopes between adjacent subintervals is determined by the value difference between these  subintervals. Therefore, the following constraints are constructed:
\begin{equation}
  \begin{cases}
[MC]:\triangle u_{j}^{s}\geq0,\ s=1,2,\cdots,\lambda_{j},\ j\in M,\\
[NC]:\begin{cases}
 u_{j}(g_{j}^{0})=0,\ j\in M,\\
 \sum\limits_{j=1}^{m}\delta_{j}\times\sum\limits_{s=1}^{\lambda_{j}}\triangle u_{j}^{s}=1,
 \end{cases}\\
[SC]:|\triangle u_{j}^{s+1}-\triangle u_{j}^{s}|\leq\varphi,\ s=1,\cdots,\lambda_{j}-1,\ j\in M,\\
\delta_{j}=\begin{cases}
1,~the~DM~evaluates~the~alternatives~based~on~c_{j},\\
0,~otherwise,
\end{cases}j\in M,
 \end{cases}
\end{equation}
where the constraints $[SC]$ are used to control the difference in slopes between adjacent subintervals.

In fact, the complexity of $u_{\ast}(\cdot)$ is related to the number of marginal value functions that constitute $u_{\ast}(\cdot)$, in addition to the degree to which the marginal value functions deviate from linearity. The fewer marginal value functions composing $u_{\ast}(\cdot)$, the fewer model parameters, and the lower the model complexity. As a result, we can control the complexity of value functions by minimizing the following regularization term:
\begin{equation}\label{conplexity}
  \Omega(u_{\ast})=p\times\varphi+\sum\limits_{j=1}^{m}\delta_{j},
\end{equation}
where $\varphi$ measures the extent to which marginal value functions deviate from linearity and $\sum\limits_{j=1}^{m}\delta_{j}$ is the number of marginal value functions that compose value functions. The parameter $p>0$ is a constant used to strike a trade-off between the two. When minimizing the regularization term as the objective function (model (\ref{regularization-2})) or as a part of the objective function (model (\ref{regularization-1})), if parameter $p$ is set to a larger value, the resulting value function tends to be more linear, but this may lead to the selection of more criteria. The choice of the value of $p$ depends on the preference of the relevant personnel towards balancing these two complexities. If a closer approximation to a linear value function is desired, a larger $p$ should be set; if the goal is to minimize the number of selected criteria, a smaller $p$ should be chosen.

\section{The criteria selection model}\label{criteria selection model}
Due to limitations in cognitive ability, DMs are unable to evaluate alternatives based on all criteria when the number of criteria is large. In such a case, the preference information given by DMs is the result of their judgments based on the criteria they are most concerned with. Given such preference information, we need to infer not only the value function that approximates the DM's preference system, but also the criteria considered by the DM to draw his (her) judgments. In this section, we discuss this problem using preference disaggregation technique and regularization theory in two scenarios, where the DM's preference information is consistent and inconsistent, respectively.

\subsection{Scenario of inconsistent preference information}\label{main-model-1}
When preference information is inconsistent, according to Remark \ref{aa}, the larger set $\mathcal{C}_{t}$ is, the smaller may be the empirical error of $\mathcal{C}_{t}$-based value functions, but then these value functions have a higher complexity. Therefore, we need to make a trade-off between the empirical error and the generalization error of value functions. To achieve this, the following criteria selection model is constructed:

\begin{align}\label{regularization-1}
\begin{array}{ll}
 \text{minimize}   & C\times\sum\limits_{a_{i}\in A^{R}}(\sigma_{+}(a_{i})+\sigma_{-}(a_{i}))+(p\times\varphi+\sum\limits_{j=1}^{m}\delta_{j})\\
 \text{subject to} & \begin{cases}
 [MC]:\triangle u_{j}^{s}\geq0,\ s=1,2,\cdots,\lambda_{j},\ j\in M,\\
[NC]:\begin{cases}
 u_{j}(g_{j}^{0})=0,\ j\in M,\\
 \sum\limits_{j=1}^{m}\delta_{j}\times\textbf{V}_{j}^{\textrm{T}}\times\textbf{I}=1,
 \end{cases}\\
[SC]:|\triangle u_{j}^{s+1}-\triangle u_{j}^{s}|\leq\varphi,\ s=1,\cdots,\lambda_{j}-1,\ j\in M,\\
 E^{A^{R}}:\begin{cases}
 u_{\ast}(a_{p})+\sigma_{+}(a_{p})-\sigma_{-}(a_{p})=u_{\ast}(a_{q})+\sigma_{+}(a_{q})-\sigma_{-}(a_{q}),\\ ~\mbox{for}~\ a_{p},a_{q}\in A^{R}\ ~\&~\ a_{p}\sim a_{q},\\
 u_{\ast}(a_{p})+\sigma_{+}(a_{p})-\sigma_{-}(a_{p})\geq u_{\ast}(a_{q})+\sigma_{+}(a_{q})-\sigma_{-}(a_{q})+\varepsilon,\\~\mbox{for}~\ a_{p},a_{q}\in A^{R}\ ~\&~\ a_{p}\succ a_{q},\\
 \sigma_{+}(a_{p}),\sigma_{+}(a_{q}),\sigma_{-}(a_{p}),\sigma_{-}(a_{q})\geq0,~\mbox{for}~\ a_{p},a_{q}\in A^{R},
 \end{cases}\\
 \delta_{j}=\begin{cases}
1,~the~DM~evaluates~the~alternatives~based~on~c_{j},\\
0,~otherwise,
\end{cases}j\in M,\\
 \varepsilon>0.
\end{cases}
\end{array}
\end{align}
The decision variables of this model are: $\varphi,\ \{\textbf{V}_{j},\ j\in M\},\ \{\delta_{j},\ j\in M\}$, $\{\sigma_{+}(a_{i}),\sigma_{-}(a_{i}),\ a_{i}\in A^{R}\}$. The objective function is a general regularized loss function, which consists of two parts: empirical error $\sum\limits_{a_{i}\in A^{R}}(\sigma_{+}(a_{i})+\sigma_{-}(a_{i}))$ and generalization error $p\times\varphi+\sum\limits_{j=1}^{m}\delta_{j}$. The constant $C$ is used to balance these two parts. The value of $C$ is related to the preferences of the relevant personnel (DMs or decision analysts). If they wish that the value function faithfully reflects the DM's preference information, they can set a larger $C$. In this case, model (\ref{regularization-1}) will prioritize minimizing the empirical error, but this may lead to the selection of too many criteria and the constructed value function deviating too much from linearity. If the value of $C$ is small, model (\ref{regularization-1}) will reduce the complexity of the model by sacrificing the fitting ability of the value function to preference information, meaning that the constructed value function will be close to linear and involve fewer criteria. In extreme cases, only one criterion will be selected. A possible way to determine a reasonable value of $C$ is to preset several different values for $C$ according to the preferences of the relevant personnel. The calculation results and corresponding suggestions of model (\ref{regularization-1}) under these values of $C$ are then presented to the DM, who can determine an appropriate value of $C$.

Given the optimal solution of model (\ref{regularization-1}), $\{c_{j}|\delta_{j}=1\}$ are the criteria that the DM considered when making judgments of reference alternatives, and the corresponding value function is $u(\cdot)=\sum\limits_{\{c_{j}|\delta_{j}=1\}}\textbf{V}_{j}^{\textrm{T}}\times \textbf{A}_{j}(\cdot)$. If there is a need to limit the number of selected criteria, the constraint $\sum\limits_{j=1}^{m}\delta_{j}\leq L$ can be added. Due to the presence of $\delta_{j}\times \textbf{V}_{j}^{\textrm{T}}$, model (\ref{regularization-1}) is a nonlinear mixed-integer program, which is hard to solve. Therefore, we transform this model into an equivalent linear form through mathematical transformation.

Let $z_{j}^{s}=\delta_{j}\times\triangle u_{j}^{s}$,\ \ $\textbf{Z}_{j}^{\textrm{T}}=(z_{j}^{1},z_{j}^{2},\cdots,z_{j}^{\lambda_{j}})^{\textrm{T}}$, and add the following constraints to model (\ref{regularization-1}):
\begin{equation}\label{transformation}
 \begin{cases}
 z_{j}^{s}\leq\triangle u_{j}^{s},\\
 z_{j}^{s}\geq\triangle u_{j}^{s}-(1-\delta_{j}),\\
 \rho\times\delta_{j}\leq z_{j}^{s}\leq\delta_{j},
 \end{cases} s=1,2,\cdots,\lambda_{j}.
\end{equation}
From Eq. (\ref{transformation}), we can see that when $\delta_{j}=1$, $z_{j}^{s}=\triangle u_{j}^{s}$, and when $\delta_{j}=0$, $z_{j}^{s}=0$.  After such transformation, model (\ref{regularization-1}) takes the following form:
\begin{align}\label{regularization-1*}
\begin{array}{ll}
 \text{minimize}   & C\times\sum\limits_{a_{i}\in A^{R}}(\sigma_{+}(a_{i})+\sigma_{-}(a_{i}))+(p\times\varphi+\sum\limits_{j=1}^{m}\delta_{j})\\
 \text{subject to} & \begin{cases}
 [MC]:\triangle u_{j}^{s}\geq0,\ s=1,2,\cdots,\lambda_{j},\ j\in M,\\
[NC]:\begin{cases}
 u_{j}(g_{j}^{0})=0,\ j\in M,\\
 \sum\limits_{j=1}^{m}\textbf{Z}_{j}^{\textrm{T}}\times\textbf{I}=1,
 \end{cases}\\
[SC]:|\triangle u_{j}^{s+1}-\triangle u_{j}^{s}|\leq\varphi,\ s=1,\cdots,\lambda_{j}-1,\ j\in M,\\
 E^{A^{R}}:\begin{cases}
 u_{z\ast}(a_{p})+\sigma_{+}(a_{p})-\sigma_{-}(a_{p})=u_{z\ast}(a_{q})+\sigma_{+}(a_{q})-\sigma_{-}(a_{q}),\\~\mbox{for}~\ a_{p},a_{q}\in A^{R}\ ~\&~ \ a_{p}\sim a_{q},\\
 u_{z\ast}(a_{p})+\sigma_{+}(a_{p})-\sigma_{-}(a_{p})\geq u_{z\ast}(a_{q})+\sigma_{+}(a_{q})-\sigma_{-}(a_{q})+\varepsilon,\\~\mbox{for}~\ a_{p},a_{q}\in A^{R}\ ~\&~\ a_{p}\succ a_{q},\\
 \sigma_{+}(a_{p}),\sigma_{+}(a_{q}),\sigma_{-}(a_{p}),\sigma_{-}(a_{q})\geq0,~\mbox{for}~\  a_{p},a_{q}\in A^{R},
 \end{cases}\\
 [LC]:\begin{cases}
 \textbf{Z}_{j}^{\textrm{T}}\leq\textbf{V}_{j}^{\textrm{T}},\\
 \textbf{Z}_{j}^{\textrm{T}}\geq\textbf{V}_{j}^{\textrm{T}}-(1-\delta_{j})\times\textbf{I},\\
 \rho\times\delta_{j}\times\textbf{I}\leq\textbf{Z}^{\textrm{T}}_{j}\leq\delta_{j}\times\textbf{I},
 \end{cases} j\in M,\\
 \delta_{j}=\begin{cases}
1,~the~DM~evaluates~the~alternatives~based~on~c_{j},\\
0,~otherwise,
\end{cases}j\in M,\\
 \varepsilon>0.
\end{cases}
\end{array}
\end{align}
where $u_{z\ast}(\cdot)=\sum\limits_{j=1}^{m}\textbf{Z}_{j}^{\textrm{T}}\times \textbf{A}_{j}(\cdot)$, and $\textbf{Z}_{j}^{\textrm{T}}\leq\textbf{V}_{j}^{\textrm{T}}$ indicates that the corresponding elements of the two vectors satisfy the inequality. In addition to $\varphi,\ \{\textbf{V}_{j}^{\textrm{T}},\ j\in M\},\ \{\delta_{j},\ j\in M\}$, $\{\sigma_{+}(a_{i}),\sigma_{-}(a_{i}),\ a_{i}\in A^{R}\}$, model (\ref{regularization-1*}) also adopts $\{\textbf{Z}^{\textrm{T}}_{j},\ j\in M\}$ as decision variables. This transformation only adds $m$ variables and $4\times\sum\limits_{j=1}^{m}\lambda_{j}$ linear constraints. Model (\ref{regularization-1*}) is a linear mixed-integer programming. Many mature solvers can efficiently solve this model, such as Gurobi, COPT, MATLAB and Lingo.

\begin{proposition}\label{main-proof}
Model (\ref{regularization-1}) and model (\ref{regularization-1*}) are equivalent.
\end{proposition}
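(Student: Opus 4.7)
The plan is to establish the equivalence by exhibiting a value-preserving correspondence between the feasible regions of the two models. The only nonlinearity in model (\ref{regularization-1}) lies in the products $\delta_{j}\times\triangle u_{j}^{s}$ appearing (through $\delta_{j}\times\textbf{V}_{j}^{\textrm{T}}$) in the normalization $\sum_{j}\delta_{j}\times\textbf{V}_{j}^{\textrm{T}}\times\textbf{I}=1$ and in $u_{\ast}(\cdot)$, while the linearizing variables $z_{j}^{s}$ are introduced precisely to represent these products. Accordingly, the proof proceeds by showing that the constraint block $[LC]$ forces $z_{j}^{s}=\delta_{j}\times\triangle u_{j}^{s}$ at every feasible point of (\ref{regularization-1*}), and then checking that, under this identity, the two models collapse to one another constraint by constraint.

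The main step is a case analysis on $\delta_{j}\in\{0,1\}$ (recall $\rho=0$). If $\delta_{j}=1$, the three inequalities in $[LC]$ reduce to $z_{j}^{s}\leq\triangle u_{j}^{s}$, $z_{j}^{s}\geq\triangle u_{j}^{s}$ and $0\leq z_{j}^{s}\leq 1$, forcing $z_{j}^{s}=\triangle u_{j}^{s}=\delta_{j}\times\triangle u_{j}^{s}$. If $\delta_{j}=0$, they reduce to $z_{j}^{s}\leq\triangle u_{j}^{s}$, $z_{j}^{s}\geq\triangle u_{j}^{s}-1$ and $0\leq z_{j}^{s}\leq 0$, forcing $z_{j}^{s}=0=\delta_{j}\times\triangle u_{j}^{s}$. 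Given this identity, the normalization $\sum_{j}\textbf{Z}_{j}^{\textrm{T}}\times\textbf{I}=1$ becomes $\sum_{j}\delta_{j}\times\textbf{V}_{j}^{\textrm{T}}\times\textbf{I}=1$; the auxiliary value function $u_{z\ast}(\cdot)=\sum_{j}\textbf{Z}_{j}^{\textrm{T}}\times\textbf{A}_{j}(\cdot)$ coincides with $u_{\ast}(\cdot)$; and the remaining blocks $[MC]$, the first part of $[NC]$, $[SC]$, the nonnegativity of $\sigma_{\pm}$, and $E^{A^{R}}$ match those of (\ref{regularization-1}) verbatim. Since the objective depends only on $(\sigma,\varphi,\delta)$, it coincides as well. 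Projecting out $\textbf{Z}_{j}$ therefore sends every feasible solution of (\ref{regularization-1*}) to a feasible solution of (\ref{regularization-1}) with identical objective value, and lifting via $z_{j}^{s}:=\delta_{j}\times\triangle u_{j}^{s}$ realises the converse.

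The one subtle point, which I expect to be the main thing worth spelling out carefully, is the lifting direction when $\delta_{j}=0$: the constraint $z_{j}^{s}\geq\triangle u_{j}^{s}-(1-\delta_{j})$ then demands $\triangle u_{j}^{s}\leq 1$, whereas (\ref{regularization-1}) places no explicit upper bound on $\triangle u_{j}^{s}$ for unselected criteria. This gap is bridged by a simple canonical-form argument: in any feasible solution of (\ref{regularization-1}), resetting $\triangle u_{j}^{s}:=0$ for every $j$ with $\delta_{j}=0$ preserves feasibility (monotonicity holds trivially, $[SC]$ is satisfied with the same $\varphi$, and the normalization is unaffected because such terms carry a zero coefficient) and leaves the objective unchanged (the unselected $\triangle u_{j}^{s}$ do not enter it either). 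After this preprocessing the lift is immediate, and the equivalence of feasible regions, hence of optimal values, follows.
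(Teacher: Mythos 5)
Your proof is correct and follows essentially the same strategy as the paper's: a bidirectional correspondence between feasible solutions that preserves the objective value, with the $[LC]$ block forcing $z_{j}^{s}=\delta_{j}\times\triangle u_{j}^{s}$. One point in your favour: the ``subtle point'' you flag in the lifting direction is real, and the paper's own proof glosses over it. The paper simply sets $\overline{z_{j}^{s}}=0$ when $\delta_{j}=0$ and declares that $[LC]$ is ``evidently'' satisfied, but the inequality $z_{j}^{s}\geq\triangle u_{j}^{s}-(1-\delta_{j})$ then requires $\triangle u_{j}^{s}\leq 1$, which model (\ref{regularization-1}) does not impose on unselected criteria (monotonicity gives only $\triangle u_{j}^{s}\geq 0$, and the normalization constraint carries a zero coefficient on these terms). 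Your canonical-form preprocessing --- resetting $\triangle u_{j}^{s}:=0$ for all $j$ with $\delta_{j}=0$, which preserves feasibility and the objective --- closes this gap cleanly, so your write-up is in fact slightly more rigorous than the published argument while reaching the same conclusion by the same route.
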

\begin{proof}
See \ref{A2}.
\end{proof}

\begin{proposition}\label{well-define}
The value function determined by model (\ref{regularization-1*}) is non-degenerate.
\end{proposition}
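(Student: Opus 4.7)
The plan is to argue by contradiction. Assume that some optimal solution of model (\ref{regularization-1*}), call it $(\{\delta_j^\ast\}, \{\textbf{V}_j^\ast\}, \{\textbf{Z}_j^\ast\}, \varphi^\ast, \{\sigma_{\pm}^\ast(a_i)\})$, induces a degenerate value function $u(\cdot)=\sum_{\{j\mid \delta_j^\ast=1\}}(\textbf{V}_j^\ast)^{\textrm{T}}\times\textbf{A}_j(\cdot)$. By Definition~\ref{degenerate}, this means there exists an index $j_0$ with $\delta_{j_0}^\ast=1$ yet $\textbf{V}_{j_0}^\ast=\textbf{0}$, that is, $\triangle u_{j_0}^s=0$ for every $s=1,\ldots,\lambda_{j_0}$. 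The goal is to construct a strictly better feasible point by flipping $\delta_{j_0}^\ast$ to $0$ while leaving every other decision variable untouched.

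First I would deduce what $\textbf{V}_{j_0}^\ast=\textbf{0}$ together with $\delta_{j_0}^\ast=1$ forces on the auxiliary vector $\textbf{Z}_{j_0}^\ast$. The $[LC]$ block, combined with the paper's standing choice $\rho=0$, squeezes $z_{j_0}^s$ from above by $\triangle u_{j_0}^s=0$ and from below by $\triangle u_{j_0}^s-(1-\delta_{j_0}^\ast)=0$, so necessarily $\textbf{Z}_{j_0}^\ast=\textbf{0}$. This observation is crucial because it means the only quantities through which $\textbf{Z}_{j_0}$ actually enters the rest of the program, namely the normalization $\sum_j (\textbf{Z}_j^\ast)^{\textrm{T}}\times\textbf{I}=1$ and the comprehensive values $u_{z\ast}(a_i)$, would be indistinguishable from the case $\delta_{j_0}=0$.

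Next I would verify feasibility of the modified point obtained by setting $\delta_{j_0}^{\mathrm{new}}=0$ and keeping everything else unchanged. Constraints $[MC]$, $[SC]$, the preference relations in $E^{A^R}$, and $\varepsilon>0$ do not involve $\delta_{j_0}$ directly; $[NC]$ is preserved because $\textbf{Z}_{j_0}^\ast$ remains zero; and the three $[LC]$ inequalities at index $j_0$ collapse to $0\le 0$, $0\ge -1$ and $0\le 0\le 0$, all trivially satisfied. Thus the new point is feasible, and the objective differs from the original only in the term $\sum_j \delta_j$, which decreases by exactly $1$, while the empirical error and $p\varphi$ parts are unchanged. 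This strict improvement contradicts the optimality of the original point, forcing every optimum to be non-degenerate.

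The main (and essentially only) obstacle is the careful bookkeeping in the $[LC]$ block, since those are the constraints coupling $\delta_{j_0}$, $\textbf{V}_{j_0}$ and $\textbf{Z}_{j_0}$; once the implication $\textbf{V}_{j_0}^\ast=\textbf{0}\Rightarrow\textbf{Z}_{j_0}^\ast=\textbf{0}$ is established, the feasibility check and the strict objective decrease follow by inspection. A minor sanity check worth including is that at least one $j$ with $\delta_j^\ast=1$ must carry $\textbf{V}_j^\ast\neq\textbf{0}$, which is guaranteed by the normalization $\sum_j (\textbf{Z}_j^\ast)^{\textrm{T}}\times\textbf{I}=1$ together with $[LC]$, so the flipping argument can never empty the selected criteria set.
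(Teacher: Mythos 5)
Your proposal is correct and takes essentially the same route as the paper's own proof: argue by contradiction, flip the offending $\delta_{j_0}$ from $1$ to $0$ while keeping all other variables fixed, check feasibility, and observe that the objective strictly decreases by $1$. Your version is simply more explicit than the paper's, which leaves the feasibility check (in particular the implication $\textbf{V}_{j_0}^\ast=\textbf{0}\Rightarrow\textbf{Z}_{j_0}^\ast=\textbf{0}$ via $[LC]$) as an ``easy to verify'' step.
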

\begin{proof}
See \ref{A3}.
\end{proof}

Proposition \ref{well-define} ensures that the performance of alternatives on the criteria selected by model (\ref{regularization-1*}) is valuable to DMs, i.e., they indeed evaluate alternatives based on these criteria. If $p=0$, the value function determined by model (\ref{regularization-1*}) has the following characteristic.
\begin{remark}\label{cc}
Let $p=0$. Assume that the value function determined by model (\ref{regularization-1*}) is $\overline{u(\cdot)}$, and it consists of $k$ marginal value functions. Then, $\overline{u(\cdot)}$  is the value function with the smallest empirical error among all value functions based on criteria subsets that include $k$ criteria.
\end{remark}

Model (\ref{regularization-1*}) serves dual function of preference learning and criteria selection. By solving model (\ref{regularization-1*}), we can not only infer which criteria the DM uses to evaluate alternatives but also determine the value function that represents the DM's preference system. From the perspective of preference learning, model (\ref{regularization-1*}) uses parameter $C$ to balance the fitting ability and complexity of value functions. The complexity has two meanings, one is the degree of deviation from linearity, and the other is the number of marginal value functions. The learned value function, while maintaining a certain level of fitting ability for the DM's preference information, takes a simpler form. Such value function can be considered as ``simple but not simpler\footnote{``Everything should be made as simple as possible, but not simpler''--commonly attributed to Albert Einstein}''. From the perspective of feature selection, model (\ref{regularization-1*}) serves as an embedded feature selection method. It evaluates the quality of feature subsets based on the empirical error and generalization error of value functions, and completes the learning of the value function during the process of feature selection. Model (\ref{regularization-1*}) takes a few holistic judgments of reference alternatives as an input, and DMs can explore the learned value function by altering or enriching the preference information. Furthermore, the criteria selection method based on 0-1 programming adopts to different types of domain knowledge provided by the DMs. These characteristics make preference learning constructive and increase the DMs' confidence in the decision aiding.

\subsection{Scenario of consistent preference information}\label{main-model-2}
If the preference information is consistent, there exists a $\mathcal{C}$-based compatible value function capable of restoring this preference information. The fact that $\mathcal{C}$ is the supporting criteria set implies the possibility of existence of smaller relevant criteria subsets.
At this point, there are several interesting points worth of our attention:
\begin{itemize}
  \item The supporting criteria set with minimum number of relevant criteria. Such supporting criteria set is the simplest relative to the preference information.
  \item The common part of streamlined supporting criteria sets. Such common criteria are inevitably taken into account by DMs when evaluating reference alternatives.
  \item Criteria that are not included in any streamlined supporting criteria set. Such criteria are not taken into account by DMs when evaluating reference alternatives.
\end{itemize}

To identify the simplest supporting criteria set, we construct the following optimization model:
\begin{align}\label{regularization-2}
\begin{array}{ll}
 \text{minimize}   & p\times\varphi+\sum\limits_{j=1}^{m}\delta_{j}\\
 \text{subject to} & \begin{cases}
 [MC]:\triangle u_{j}^{s}\geq0,\ s=1,2,\cdots,\lambda_{j},\ j\in M,\\
[NC]:\begin{cases}
 u_{j}(g_{j}^{0})=0,\ j\in M,\\
 \sum\limits_{j=1}^{m}\delta_{j}\times\textbf{V}_{j}^{\textrm{T}}\times\textbf{I}=1,
 \end{cases}\\
[SC]:|\triangle u_{j}^{s+1}-\triangle u_{j}^{s}|\leq\varphi,\ s=1,\cdots,\lambda_{j}-1,\ j\in M,\\
 E^{A^{R}}:\begin{cases}
 \sum\limits_{j=1}^{m}\delta_{j}\times \textbf{V}_{j}^{\textrm{T}}\times \textbf{A}_{j}(a_{p})=\sum\limits_{j=1}^{m}\delta_{j}\times \textbf{V}_{j}^{\textrm{T}}\times \textbf{A}_{j}(a_{q}),\\~\mbox{for}~\ a_{p},a_{q}\in A^{R}\ ~\&~ \ a_{p}\sim a_{q},\\
 \sum\limits_{j=1}^{m}\delta_{j}\times \textbf{V}_{j}^{\textrm{T}}\times \textbf{A}_{j}(a_{p})\geq \sum\limits_{j=1}^{m}\delta_{j}\times \textbf{V}_{j}^{\textrm{T}}\times \textbf{A}_{j}(a_{q})+\varepsilon,\\~\mbox{for}~\ a_{p},a_{q}\in A^{R}\ ~\&~ \ a_{p}\succ a_{q},
 \end{cases}\\
  \delta_{j}=\begin{cases}
1,~the~DM~evaluates~the~alternatives~based~on~c_{j},\\
0,~otherwise,
\end{cases}j\in M,\\
 \varepsilon>0.
\end{cases}
\end{array}
\end{align}
Model (\ref{regularization-2}) performs criteria selection and value function construction by weighing the extent to which the value function deviates from linearity and the number of marginal value functions, and the supporting criteria set determined by this model is simple as long as $p$ is sufficiently small. Similarly, model (\ref{regularization-2}) can be transformed into a linear mixed-integer programming using the transformation introduced in Section \ref{main-model-1}:
\begin{align}\label{regularization-2*}
\begin{array}{ll}
 \text{minimize}   & p\times\varphi+\sum\limits_{j=1}^{m}\delta_{j}\\
 \text{subject to} & \begin{cases}
 [MC]:\triangle u_{j}^{s}\geq0,\ s=1,2,\cdots,\lambda_{j},\ j\in M,\\
[NC]:\begin{cases}
 u_{j}(g_{j}^{0})=0, j\in M,\\
 \sum\limits_{j=1}^{m}\textbf{Z}_{j}^{\textrm{T}}\times\textbf{I}=1,
 \end{cases}\\
[SC]:|\triangle u_{j}^{s+1}-\triangle u_{j}^{s}|\leq\varphi,\ s=1,\cdots,\lambda_{j}-1,\ j\in M,\\
 E^{A^{R}}:\begin{cases}
\sum\limits_{j=1}^{m}\textbf{Z}_{j}^{\textrm{T}}\times \textbf{A}_{j}(a_{p})=\sum\limits_{j=1}^{m}\textbf{Z}_{j}^{\textrm{T}}\times \textbf{A}_{j}(a_{q}), \\ ~\mbox{for}~\ a_{p},a_{q}\in A^{R}\ ~\&~ \ a_{p}\sim a_{q},\\
 \sum\limits_{j=1}^{m}\textbf{Z}_{j}^{\textrm{T}}\times \textbf{A}_{j}(a_{p})\geq \sum\limits_{j=1}^{m}\textbf{Z}_{j}^{\textrm{T}}\times \textbf{A}_{j}(a_{q})+\varepsilon,\\ ~\mbox{for}~\ a_{p},a_{q}\in A^{R}\ ~\&~ \ a_{p}\succ a_{q},
 \end{cases}\\
 [LC]:\begin{cases}
 \textbf{Z}_{j}^{\textrm{T}}\leq\textbf{V}_{j}^{\textrm{T}},\\
 \textbf{Z}_{j}^{\textrm{T}}\geq\textbf{V}_{j}^{\textrm{T}}-(1-\delta_{j})\times\textbf{I},\\
 \rho\times\delta_{j}\times\textbf{I}\leq\textbf{Z}^{\textrm{T}}_{j}\leq\delta_{j}\times\textbf{I},
 \end{cases}j\in M,\\
 \delta_{j}=\begin{cases}
1,~the~DM~evaluates~the~alternatives~based~on~c_{j},\\
0,~otherwise,
\end{cases}j\in M,\\
 \varepsilon>0.
\end{cases}
\end{array}
\end{align}
The criteria corresponding to $\delta_{j}=1$  form the supporting criteria set $\mathcal{C}_{k}$ for preference information.
\begin{proposition}\label{bb}
The value function determined by model (\ref{regularization-2*}) is non-degenerate.
\end{proposition}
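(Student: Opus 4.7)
The plan is to argue by contradiction, exploiting the fact that the objective $p\varphi + \sum_{j=1}^{m}\delta_{j}$ charges one unit for each criterion declared ``selected'' via $\delta_{j}=1$. Suppose that an optimal solution $(\varphi^{*},\{\textbf{V}_{j}^{*}\},\{\textbf{Z}_{j}^{*}\},\{\delta_{j}^{*}\})$ of model (\ref{regularization-2*}) yields a degenerate value function, i.e., there exists $j^{*}\in M$ with $\delta_{j^{*}}^{*}=1$ (so $c_{j^{*}}$ belongs to the supporting set $\mathcal{C}_{k}$ on which $u_{z*}$ is constructed) while $\textbf{V}_{j^{*}}^{*}=\textbf{0}$. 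I would first observe that the linking constraints $[LC]$ force $\textbf{Z}_{j^{*}}^{*}=\textbf{0}$ in this case, because $\textbf{Z}_{j^{*}}^{*}\leq \textbf{V}_{j^{*}}^{*}=\textbf{0}$ and $\textbf{Z}_{j^{*}}^{*}\geq \rho\delta_{j^{*}}^{*}\textbf{I}=\textbf{0}$ (recall that the method assumes $\rho=0$).

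Next I would construct a new feasible solution by flipping $\delta_{j^{*}}$ from $1$ to $0$ while keeping every other variable unchanged. I would verify feasibility constraint by constraint. The monotonicity $[MC]$ and the slope-deviation bounds $[SC]$ depend only on the $\triangle u_{j}^{s}$ variables (i.e., the entries of $\textbf{V}_{j}$), which are untouched. The normalization $[NC]$ now reads $\sum_{j=1}^{m}\textbf{Z}_{j}^{*\textrm{T}}\times\textbf{I}=1$, and since the term corresponding to $j^{*}$ contributed $\textbf{0}^{\textrm{T}}\times\textbf{I}=0$ already, the sum is preserved. The preference constraints in $E^{A^{R}}$ depend on $u_{z*}(\cdot)=\sum_{j}\textbf{Z}_{j}^{\textrm{T}}\times \textbf{A}_{j}(\cdot)$ and are therefore unaffected. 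Finally, for the linking constraints $[LC]$ at index $j^{*}$ under the new value $\delta_{j^{*}}=0$, I would check $\textbf{Z}_{j^{*}}^{*}=\textbf{0}\leq\textbf{V}_{j^{*}}^{*}=\textbf{0}$, $\textbf{Z}_{j^{*}}^{*}=\textbf{0}\geq\textbf{V}_{j^{*}}^{*}-(1-0)\textbf{I}=-\textbf{I}$, and $\textbf{0}=\rho\cdot 0\cdot\textbf{I}\leq\textbf{Z}_{j^{*}}^{*}\leq 0\cdot\textbf{I}=\textbf{0}$, so all three are satisfied.

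Having produced a feasible solution that differs from the original only in $\delta_{j^{*}}$, I would compare objective values: $\varphi^{*}$ is unchanged, while $\sum_{j=1}^{m}\delta_{j}$ strictly decreases by one. Hence the new objective is $p\varphi^{*}+(\sum_{j}\delta_{j}^{*}-1)$, which is strictly smaller than the putative optimum, contradicting the optimality of the original solution. Therefore no such $j^{*}$ exists, which by Definition \ref{degenerate} means the value function determined by model (\ref{regularization-2*}) is non-degenerate.

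The only subtle point, and the step I would be most careful about, is the clean handling of the linking constraints $[LC]$: specifically the need for $\rho=0$ (otherwise $\textbf{Z}_{j^{*}}^{*}\geq\rho\delta_{j^{*}}\textbf{I}$ could force $\textbf{V}_{j^{*}}^{*}\neq\textbf{0}$ whenever $\delta_{j^{*}}=1$, and the contradiction would instead have to be derived a different way). Since the authors explicitly fix $\rho=0$ in Section \ref{new value function}, this conclusion is in fact a direct consequence of the objective's sparsity term, and the argument mirrors the one used for Proposition \ref{well-define}.
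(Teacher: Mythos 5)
Your argument is correct and is essentially the paper's own: the authors prove Proposition \ref{bb} by noting it is analogous to Proposition \ref{well-define}, whose proof is exactly your contradiction — flip $\delta_{j^{*}}$ from $1$ to $0$ for a criterion with $\textbf{V}_{j^{*}}=\textbf{0}$, keep everything else fixed, and observe the objective drops by one. Your explicit constraint-by-constraint feasibility check of $[LC]$ and the remark on the role of $\rho=0$ are more detailed than what the paper writes, but the underlying idea is identical.
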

\begin{proof}
The proof of this proposition is analogous to that of Proposition \ref{well-define} and is omitted here.
\end{proof}
\begin{remark}\label{dd}
Assume that the value function determined by model (\ref{regularization-2*}) is $\overline{u(\cdot)}$, and it consists of $k$ marginal value functions. Then $\overline{u(\cdot)}$ is the most linear of all value functions based on criteria subsets containing $k$ criteria.
\end{remark}

It is evident that the simplest supporting criteria set is streamlined. Whenever a streamlined supporting criteria set $\mathcal{C}_{k}$ is identified, we add the constraint $\sum\limits_{\{j|c_{j}\in \mathcal{C}_{k}\}}\delta_{j}\leq|\mathcal{C}_{k}|-1$ to model (\ref{regularization-2*}) and re-solve it. By iterating in this way, all possible streamlined supporting sets can be identified.

\begin{proposition}\label{reduction-set}
The supporting criteria sets determined by the above method are all streamlined.
\end{proposition}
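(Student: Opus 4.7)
The plan is to prove the statement by induction on the iteration index of the procedure, reducing the streamlined property to the optimality of model~(\ref{regularization-2*}) together with Definition~\ref{reduction}. Throughout, I would rely on the fact that $\varphi$ is bounded above by some small constant on the feasible region (a consequence of $[NC]$ and $[MC]$, since each $\triangle u_j^s\in[0,1]$ implies $|\triangle u_j^{s+1}-\triangle u_j^s|\leq 1$), so that for $p<1$ the integer term $\sum_{j=1}^m\delta_j$ dominates $p\varphi$ whenever we compare two feasible solutions that differ by at least one selected criterion.

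For the base case, let $\mathcal{C}_1$ denote the optimal support set from model~(\ref{regularization-2*}) without any added iteration constraints. Suppose, for contradiction, that $\mathcal{C}_1$ is not streamlined, so there exists a proper subset $\mathcal{C}'\subsetneq \mathcal{C}_1$ that is also a supporting criteria set. By Definition~\ref{supporting} there exists a compatible $\mathcal{C}'$-based value function; taking $\delta_j=1$ for $c_j\in\mathcal{C}'$ and $\delta_j=0$ otherwise, together with the associated vectors $\mathbf{V}_j,\mathbf{Z}_j$, produces a feasible solution of model~(\ref{regularization-2*}) whose objective is $p\varphi'+|\mathcal{C}'|$. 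Since $|\mathcal{C}'|\leq|\mathcal{C}_1|-1$ and $p(\varphi_1-\varphi')<1$ for small $p$, this solution has strictly smaller objective than $\mathcal{C}_1$, contradicting the optimality of $\mathcal{C}_1$.

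For the inductive step, assume $\mathcal{C}_1,\dots,\mathcal{C}_k$ have already been shown to be streamlined, and let $\mathcal{C}_{k+1}$ be the optimal solution after augmenting model~(\ref{regularization-2*}) with the cuts $\sum_{\{j\,|\,c_j\in\mathcal{C}_i\}}\delta_j\leq|\mathcal{C}_i|-1$ for $i=1,\dots,k$. Suppose, for contradiction, that some $\mathcal{C}'\subsetneq \mathcal{C}_{k+1}$ is a supporting criteria set. The key verification is that $\mathcal{C}'$ yields a feasible solution of the augmented model. Feasibility of $\mathcal{C}_{k+1}$ itself guarantees $\mathcal{C}_{k+1}\not\supseteq \mathcal{C}_i$ for every $i\leq k$; combined with $\mathcal{C}'\subsetneq \mathcal{C}_{k+1}$, this yields $\mathcal{C}'\not\supseteq \mathcal{C}_i$, so every iteration cut is respected. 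The other constraints $[MC], [NC], [SC], E^{A^R}, [LC]$ are satisfied by any compatible $\mathcal{C}'$-based value function, extended by zeros on $\mathcal{C}\setminus\mathcal{C}'$. The objective argument from the base case then applies verbatim, contradicting the optimality of $\mathcal{C}_{k+1}$.

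The main obstacle is not the inductive architecture itself but the interaction between the regularization term $p\varphi$ and the integer term $\sum_j\delta_j$ in the objective. If $p$ were allowed to be large, an optimal solution could in principle trade one extra criterion against a substantial reduction in $\varphi$, so that the smaller set $\mathcal{C}'$ has a strictly larger objective than $\mathcal{C}_{k+1}$ despite fewer selected criteria, breaking the contradiction. The cleanest way to close this gap is to require $p$ to be sufficiently small (as already noted in the text preceding model~(\ref{regularization-2})); alternatively one may adopt a lexicographic minimization that first minimizes $\sum_j\delta_j$ and then uses $\varphi$ as a tie-breaker, in which case the proof requires no smallness condition on $p$ at all.
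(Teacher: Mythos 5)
Your proof is correct, and at bottom it uses the same mechanism the paper relies on --- the iteration cuts together with the optimality of model~(\ref{regularization-2*}) --- but you make explicit a step that the paper's own proof leaves out. The paper's argument only observes that the cut $\sum_{\{j|c_{j}\in \mathcal{C}_{i}\}}\delta_{j}\leq|\mathcal{C}_{i}|-1$ prevents any later set from containing an earlier one; taken literally, this does not yet establish the streamlined property of Definition~\ref{reduction}, because a proper supporting subset of a found set need not coincide with (or contain) any previously enumerated set. Your contradiction argument closes exactly this gap: a proper supporting subset $\mathcal{C}'\subsetneq\mathcal{C}_{k+1}$ automatically respects every cut (since $\mathcal{C}_{i}\not\subseteq\mathcal{C}_{k+1}$ and $\mathcal{C}'\subseteq\mathcal{C}_{k+1}$ imply $\mathcal{C}_{i}\not\subseteq\mathcal{C}'$), hence is feasible for the augmented model, and would then beat $\mathcal{C}_{k+1}$ in the objective. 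Your further observation that this last comparison genuinely requires $p$ to be small (e.g.\ $p<1$, using the bound $\varphi\leq 1$ that follows from $[MC]$ and $[NC]$), or else a lexicographic objective that minimizes $\sum_{j}\delta_{j}$ first, is a real refinement: the paper mentions the smallness of $p$ only informally before model~(\ref{regularization-2}) and never invokes it in its proof of Proposition~\ref{reduction-set}, even though without it the result can fail. Two minor remarks: the base case is subsumed by the inductive step with an empty set of cuts, so the induction could be collapsed into a single contradiction argument; and strictly speaking the argument shows minimality with respect to cardinality among cut-feasible supports, which is stronger than, and implies, the streamlined property.
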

\begin{proof}
See \ref{A4}.
\end{proof}
Any criteria subset that contains a streamlined supporting criteria set is also a supporting criteria set.  From the perspective of criteria selection, we do not focus on such supporting criteria sets as they contain unnecessary criteria. From a modeling perspective, such criteria subsets are valuable as the value function based on a larger criteria subset may be closer to linearity (this will be illustrated later in the case study).

Each streamlined supporting criteria set has the potential to be considered by DMs when evaluating reference alternatives. It reflects the DM's judgment policy regarding the performance of alternatives. Based on all supporting criteria sets, we can define the relevance of criteria, core criteria, and redundant criteria.
\begin{definition}
Assume that $\mathcal{C}_{k},k=1,\cdots,S$, are all streamlined supporting criteria sets for preference information. Then,
\begin{enumerate}
  \item The relevance of $c_{j}$, denoted by $R(c_{j})$, is equal to the number of streamlined supporting criteria sets that include $c_{j}$, i.e., $R(c_{j})=|\{\mathcal{C}_{k}|c_{j}\in\mathcal{C}_{k},\ k=1,\cdots,S\}|$.
  \item If $\bigcap\limits_{k=1}^{S}\mathcal{C}_{k}\neq\emptyset$, the set $\mathcal{C}_{core}=\bigcap\limits_{k=1}^{S}\mathcal{C}_{k}$ is called the core criteria set, and the criteria in it are referred to as core criteria. Obviously, the core criteria have the maximum relevance.
  \item The set $\mathcal{C}\setminus\bigcup_{k=1}^{S}\mathcal{C}_{k}$ is referred to as redundant criteria set, and the criteria in it are called redundant criteria. Redundant criteria are not included in any streamlined supporting criteria set, and their relevance is $0$.
\end{enumerate}
\end{definition}
The larger $R(c_{j})$ is, the more streamlined supporting criteria sets that contain $c_{j}$, indicating that DMs are more likely to rely on this criterion when evaluating reference alternatives. If we need to select one of many streamlined supporting criteria sets, the sum of relevance of all criteria  in a streamlined supporting criteria set can serve as an indicator. Both core criteria and redundant criteria are robust and have a clear interpretation. Core criteria are those that DMs are certain to consider when evaluating reference alternatives, while redundant criteria are those they are unlikely to rely on. Both types of criteria contribute to understanding of the DM's preference system, thereby enabling the analyst to provide a better decision support.

If preference information is consistent, there exists a compatible value function that restores this information. Therefore, the objective function of model (\ref{regularization-2}) does not include the empirical error of value functions. In fact, in some decision scenarios, we can still measure the fitting ability of value functions. For instance, \citet{ref10} use the maximum difference of comprehensive value between two alternatives with preference relation (``$\succ$'') as the fitting ability of value functions. This approach emphasizes the preference information, reproducing them boldly and robustly. In this case, the general regularization loss function is $\varepsilon+(m\times\varphi+C\times\sum\limits_{j=1}^{m}\delta_{j})$.

Model (\ref{regularization-2*}) aims to find a value function that is completely consistent with the DM's preference information. This may raise two issues:
\begin{itemize}
  \item \textbf{Criteria overly selected.} Model (\ref{regularization-2*}) only determines the supporting criteria set for preference information, and the smallest supporting criteria set may include too many criteria. For example, assuming that the smallest supporting criteria set contains $6$ criteria, this suggests that constructing a value function based on at least $6$ criteria is required to restore the preference information provided by the DM. However, we are interested in finding out which five criteria the DM would most likely rely on if limited to $5$, and similarly, which four criteria they would consider if restricted to $4$. This information is crucial for analyzing the DM's preference system and serves as the starting point for a more elaborate decision aiding process.
  \item \textbf{Generalization ability weakened.} The DM may have a cognitive bias, resulting in some ``noisy'' preference information, i.e., some preference information which does not align with the DM's preference system. Consequently, the value function determined by model (\ref{regularization-2*}) may be influenced by this noise, leading to deviation from the DM's preference system and a decrease in generalization ability. This ``noise'' may originate from irrational behavior by the DM, whereby alternatives are evaluated not only on the scores of the considered criteria, but also on extraneous factors. This can lead to preference reversals, where the preference relation of two alternatives changes due to the influence of irrelevant factors \citep{buchong8}.
\end{itemize}
To avoid the aforementioned issues, we can allow some preference information to remain unrestored and assume a limited estimation errors, subsequently using Model (\ref{regularization-1*}).

\section{An example}\label{case study}
In this section, we validate the effectiveness of the proposed criteria selection method on a hypothetical problem of selecting a green supplier.

In today's era, the realities of the shortage of resources and environmental pollution require enterprises to not only pursue economic benefits but also maintain environment friendliness in the process of development. Green Supply Chain Management (GSCM) is a new management mode that balances the economic benefits and environment sustainable development, becoming an important topic in modern enterprise production and operation management \citep{ref50}. The core idea of GSCM is to ensure that environmental concerns are addressed at every stage of the product's life-cycle, from the design of products and sourcing of raw materials, through manufacturing and logistics, to the end-of-life disposal or recycling of products. The green supplier evaluation and selection is one of the core parts of GSCM, which directly affects the environment protection performance of enterprises. They can all be considered as a multi-criteria decision-making problem involving many conflicting criteria \citep{ref51}.

Let us consider an automobile manufacturing enterprise aiming to select a supplier to purchase key components. There are 10 candidates evaluated by various criteria, including green product innovation ($g_{1}$), green image ($g_{2}$), use of environmentally friendly technology ($g_{3}$), resource consumption ($g_{4}$), green competencies ($g_{5}$), environment management ($g_{6}$), quality management ($g_{7}$), total product life cycle cost ($g_{8}$), pollution production ($g_{9}$) and staff environmental training ($g_{10}$) \citep{ref50}. Without loss of generality, it is assumed that all these criteria are benefit-type and have a evaluation scale of $[0,1]$. The performance of considered suppliers on these criteria is shown in Table \ref{Decision Martix}. They can be all considered as reference alternatives. An expert is invited to provide some holistic judgments about these suppliers, as shown in Table \ref{Preference information}. In this decision problem, we want to know what criteria the DM relies on to evaluate the suppliers and how these criteria are aggregated to get the DM's preference model.
\begin{table}[!ht]
\centering
\caption{The performance of suppliers}
\scalebox{0.9}{\begin{tabular}{ccccccccccc}\toprule
   Suppliers&	$g_{1}$ & $g_{2}$ & $g_{3}$ & $g_{4}$ & $g_{5}$ & $g_{6}$ & $g_{7}$ & $g_{8}$ & $g_{9}$ & $g_{10}$\\ \midrule
$a_{1}$&	 $0.93$ & $0.38$ & $0.33$ & $0.22$ & $0.98$ & $0.34$ & $0.18$ & $0.92$ & $0.85$ & $0$\\
$a_{2}$&	 $0.75$ & $0.82$ & $0.43$& $0.05$ & $0.27$ & $0.16$ & $0.8$& $0.23$& $0.68$& $0.04$\\
$a_{3}$&	 $0.95$ & $0.44$ & $0.61$ & $0.66$ & $0.83$ & $0.45$ & $0.22$ & $0.95$ & $0.11$ & $0.37$\\
$a_{4}$&	 $0.07$ & $0.5$ & $0.81$ & $0.62$ & $0.53$ & $0.08$ & $0.17$ & $0.76$ & $0.85$ & $0.1$\\
$a_{5}$&	 $0.8$ & $0.77$ & $0.54$ & $0.05$ & $0.1$ & $0.2$ & $0.89$ & $0.47$ & $0.53$ & $0.28$\\
$a_{6}$&	 $0.65$ & $0.1$ & $0.91$ & $0.3$ & $0.88$ & $0.33$ & $0.42$& $1$ & $0.47$ & $0.38$\\
$a_{7}$&	 $1$ & $0.28$ & $0.9$ & $0.55$ & $0.4$ & $0.98$ & $0.8$& $0.79$ & $0.18$ & $0.21$\\
$a_{8}$&	 $0.5$ & $0.04$ & $0.67$ & $0.81$ & $0.9$ & $0.7$ & $0.26$ & $0.95$ & $0.71$ & $0.24$\\
$a_{9}$&  $0.46$ & $0.62$ & $0.98$ & $0.64$ & $0.33$ & $1$ & $0.56$& $0.6$ & $0.05$ & $0.23$\\
$a_{10}$& $0.9$ & $0.02$ & $0.88$ & $0.31$ & $0.52$ & $0.63$ & $0.79$& $0.83$ & $0.77$ & $0.16$\\
   \bottomrule
     \end{tabular}}\label{Decision Martix}
\end{table}
\begin{table}[!ht]
\centering
\caption{Preference information}
\scalebox{0.9}{\begin{tabular}{cccccccc}\toprule
   $a_{5}\succ a_{1}$&	$a_{4}\succ a_{3}$ & $a_{7}\succ a_{8}$ & $a_{7}\succ a_{9}$ & $a_{3}\succ a_{10}$ & $a_{9}\succ a_{10}$ & $a_{7}\succ a_{6}$ &
   $a_{4}\succ a_{7}$\\
   \bottomrule
 \end{tabular}}\label{Preference information}
\end{table}

\subsection{Compared to the UTA method without considering criteria selection.}
Assuming that the evaluation scale is equally divided into $5$ sub-intervals ($4$ breakpoints), each marginal value function is determined by $5$ parameters:
\begin{equation}
  u_{j}(g_{j}(a_{i}))=\sum\limits_{s=1}^{5}\triangle u_{j}^{s}\times\delta_{g_{j}}^{s}(a_{i}).
\end{equation}
The preference information provided by the DM is converted into constraints of model (\ref{UTA}). The calculation results show that the preference information is consistent. There exist multiple value functions constructed using all criteria that can restore the DM's preference information. We select one of these compatible value functions to represent the DM's preference system, based on the principle of maximizing the difference of comprehensive value between two alternatives being in preference relation ``$\succ$'' \citep{ref10}. Figure \ref{basline-figure} is the graph of the selected value function. Although this value function is constructed based on all criteria, only three marginal value functions are non-zero, and all of them deviate significantly from linearity. Table \ref{basline-score} presents the comprehensive scores of the suppliers, from which we can derive the ranking of these suppliers as follows: $$a_{2}\succ a_{5}\succ a_{4}\succ a_{1}\succ a_{7}\succ a_{3}\succ a_{8}\succ a_{9}\succ a_{6}\succ a_{10}.$$
\begin{figure}[!ht]
  \centering
  \includegraphics[scale=0.3]{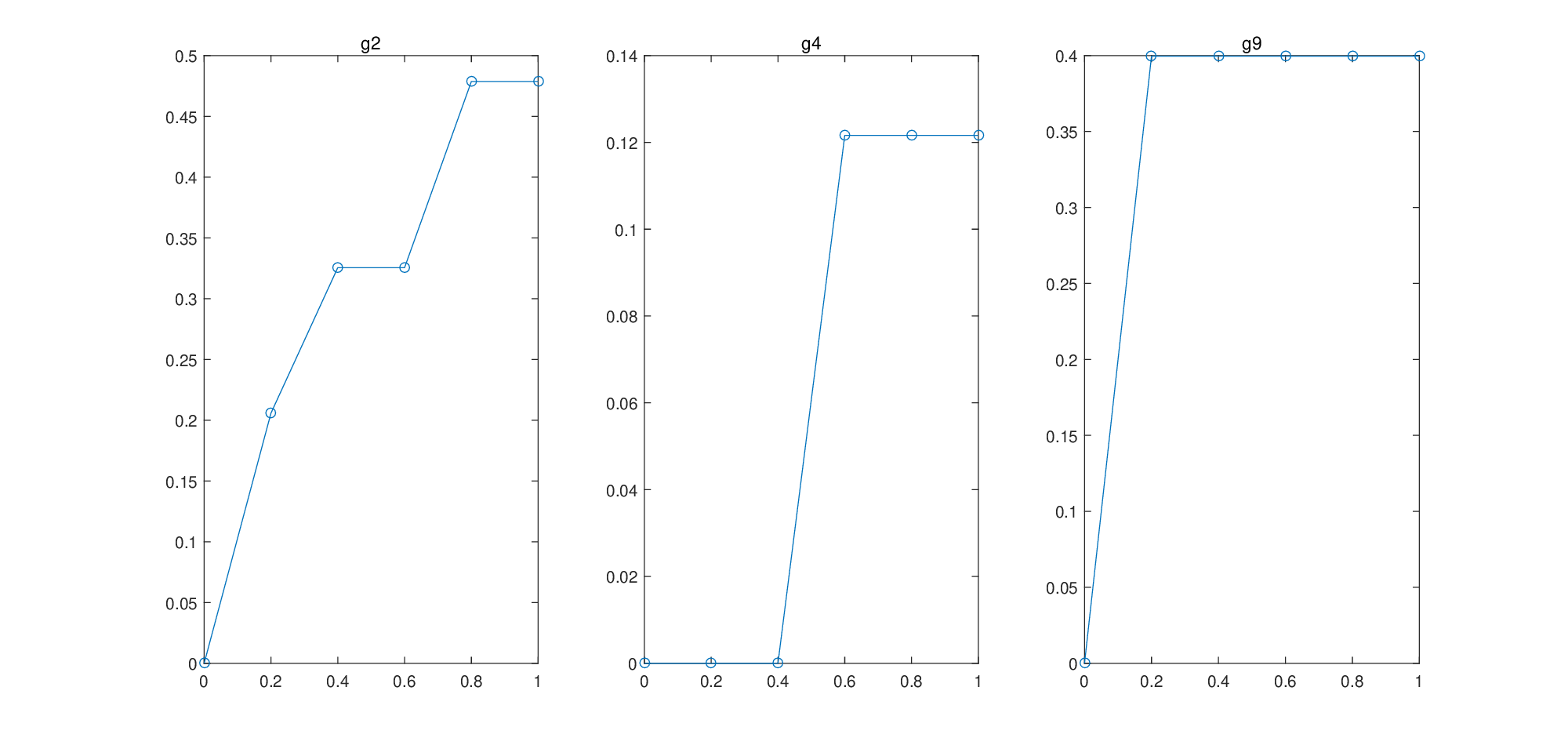}
  \caption{The value function determined by the principle of maximizing the difference of comprehensive value between two alternatives being in relation ``$\succ$''.}
  \label{basline-figure}
\end{figure}
\begin{table}[!ht]
\centering
\caption{Comprehensive scores of suppliers}
\scalebox{0.8}{\begin{tabular}{ccccccccccc}\toprule
  $a_{1}$ & $a_{2}$ & $a_{3}$ & $a_{4}$ & $a_{5}$ & $a_{6}$ & $a_{7}$ & $a_{8}$ & $a_{9}$ & $a_{10}$ \\ \midrule
  $0.7131$ & $0.8783$ & $0.6669$ & $0.8467$ & $0.8553$ & $0.5025$ & $0.7045$ & $0.5624$ & $0.5623$ & $0.4202$\\
   \bottomrule
 \end{tabular}}\label{basline-score}
\end{table}

Next, we select criteria and construct the value function according to the method proposed in this paper. With different values of $p$, model (\ref{regularization-2*}) is used to infer which criteria the DM relies on to evaluate the suppliers and how these criteria are aggregated. Table (\ref{Calculation results}) shows the criteria determined by model (\ref{regularization-2*}) and the deviation from linearity of the corresponding value function. When $p\leq 10$, the criteria determined by model (\ref{regularization-2*}) are $\{g_{2},g_{9}\}$. This suggests that the DM evaluates suppliers based on at least two criteria. Moreover, for all values of $p\leq 10$, the value function determined by model (\ref{regularization-2*}) remains unchanged. When $20\leq p\leq180$,  criteria set $\{g_{1},g_{2},g_{9}\}$ and the value function determined by model (\ref{regularization-2*}) also remain constant. When $p\geq200$, a completely linear additive value function can be constructed based on $\{g_{2},g_{7},g_{8},g_{9}\}$ to restore the preference information provided by the DM. It can be observed that as $p$ increases, model (\ref{regularization-2*}) selects more criteria, and the learned value function becomes closer to linear. Furthermore, model (\ref{regularization-2*}) is robust in large ranges of parameter $p$. When $p$ varies within a certain range, the criteria and the value function determined by model (\ref{regularization-2*}) do not change. In fact, regardless of the criteria subset identified by model (\ref{regularization-2*}), the output value function must be the one that is closest to linear among all compatible value functions constructed based on the corresponding criteria subset.
\begin{table}[!ht]
\centering
\caption{Calculation results of model (18)}
\scalebox{0.9}{\begin{tabular}{cccc}\toprule
   $p$ &	The selected criteria &$\sum\limits_{j=1}^{m}\delta_{j}$ &$\varphi$ \\ \midrule
$<1$&	 $g_{2},g_{9}$ & 2 & 0.071\\
$1$    &     $g_{2},g_{9}$ & 2 & 0.071\\
$2$    &     $g_{2},g_{9}$ & 2 & 0.071\\
$4$    &     $g_{2},g_{9}$ & 2 & 0.071\\
$6$    &     $g_{2},g_{9}$ & 2 & 0.071\\
$8$    &     $g_{2},g_{9}$ & 2 & 0.071\\
$10$&	 $g_{2},g_{9}$ & 2 & 0.071\\
$20$&	 $g_{1},g_{2},g_{9}$ & 3 & 0.005\\
$40$&	 $g_{1},g_{2},g_{9}$ & 3 & 0.005\\
$60$&	 $g_{1},g_{2},g_{9}$ & 3 & 0.005\\
$80$&	 $g_{1},g_{2},g_{9}$ & 3 & 0.005\\
$100$&	 $g_{1},g_{2},g_{9}$ & 3 & 0.005\\
$120$&	 $g_{1},g_{2},g_{9}$ & 3 & 0.005\\
$140$&	 $g_{1},g_{2},g_{9}$ & 3 & 0.005\\
$160$&	 $g_{1},g_{2},g_{9}$ & 3 & 0.005\\
$180$&	 $g_{1},g_{2},g_{9}$ & 3 & 0.005\\
$200$&	 $g_{2},g_{7},g_{8},g_{9}$ & 4 & $0$\\
$>200$&   $g_{2},g_{7},g_{8},g_{9}$ & 4 & $0$\\
   \bottomrule
 \end{tabular}}\label{Calculation results}
\end{table}

Figures (\ref{p10})$-$(\ref{p200}) show the graphs of value functions determined by model (\ref{regularization-2*}). Table \ref{cridit risk} contains the comprehensive scores of suppliers. It can be observed that the comprehensive scores of suppliers are consistent with the preference information provided by the DM. Based on the comprehensive scores of the suppliers in Table \ref{cridit risk}, the rankings of these suppliers are as follows: 
\begin{itemize}
  \item $p=10:a_{2}\succ a_{5}\succ a_{4}\succ a_{1}\succ a_{3}\succ a_{7}\succ a_{6}\sim a_{9}\succ a_{8}\succ a_{10}$;
  \item $p=100:a_{2}\succ a_{5}\succ a_{1}\succ a_{4}\succ a_{3}\succ a_{7}\succ a_{9}\succ a_{10}\succ a_{6}\succ a_{8}$;
  \item $p=200:a_{5}\succ a_{1}\succ a_{2}\succ a_{4}\succ a_{3}\sim a_{7}\succ a_{9}\succ a_{10}\succ a_{6}\succ a_{8}$.
\end{itemize}
Although the ranking of suppliers differs for different values of $p$, the differences are not significant. For instance, the top 4 suppliers are always $a_{1}$, $a_{2}$, $a_{4}$ and $a_{5}$. And without considering the selection of criteria, the top four suppliers are also $a_{1}$, $a_{2}$, $a_{4}$ and $a_{5}$. 
\begin{figure}[!ht]
  \centering
  \includegraphics[scale=0.4]{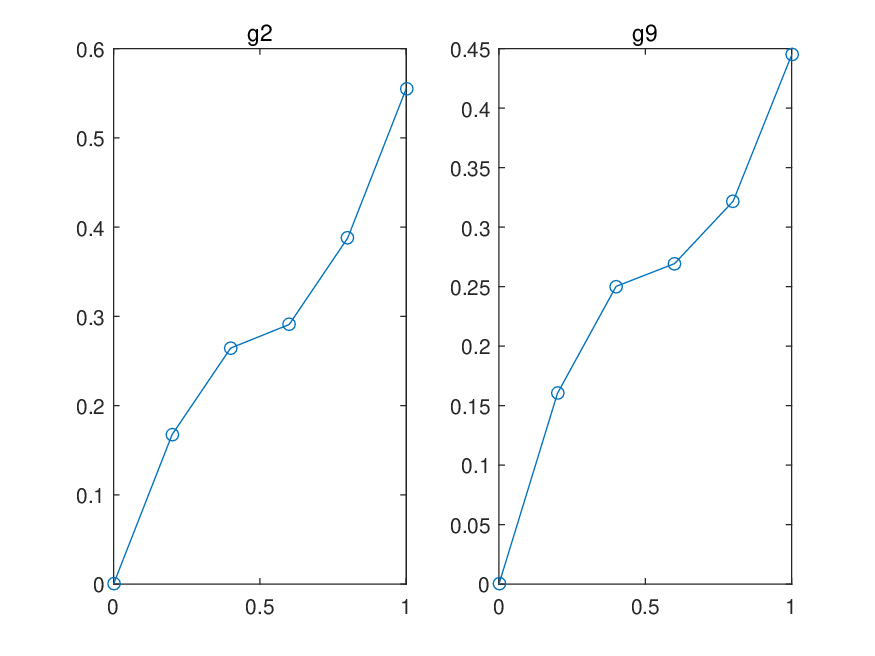}
  \caption{The marginal value functions for $p = 10$}
  \label{p10}
\end{figure}
\begin{figure}[!ht]
  \centering
  \includegraphics[scale=0.3]{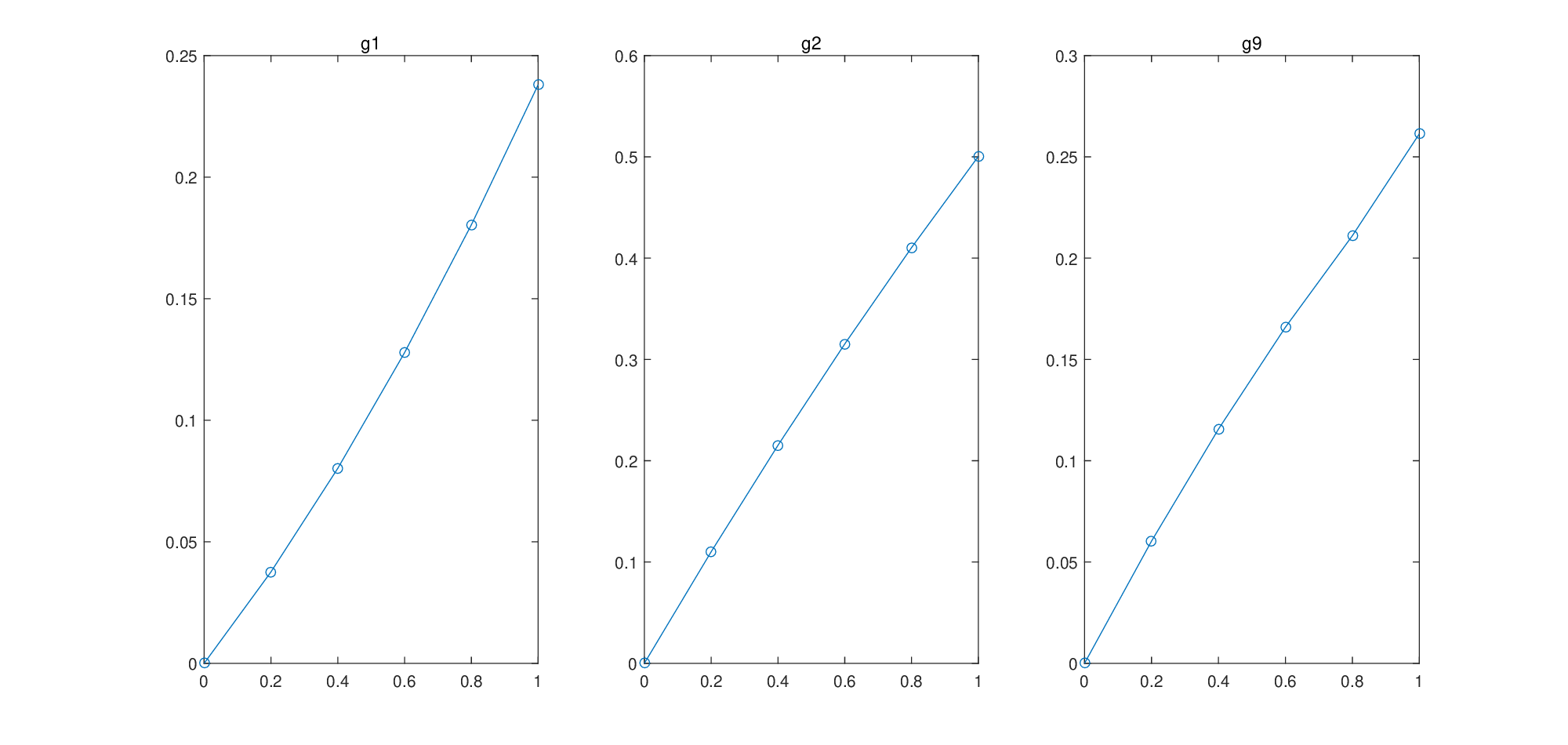}
  \caption{The marginal value functions for $p = 100$}
  \label{p100}
\end{figure}
\begin{figure}[!ht]
  \centering
  \includegraphics[scale=0.35]{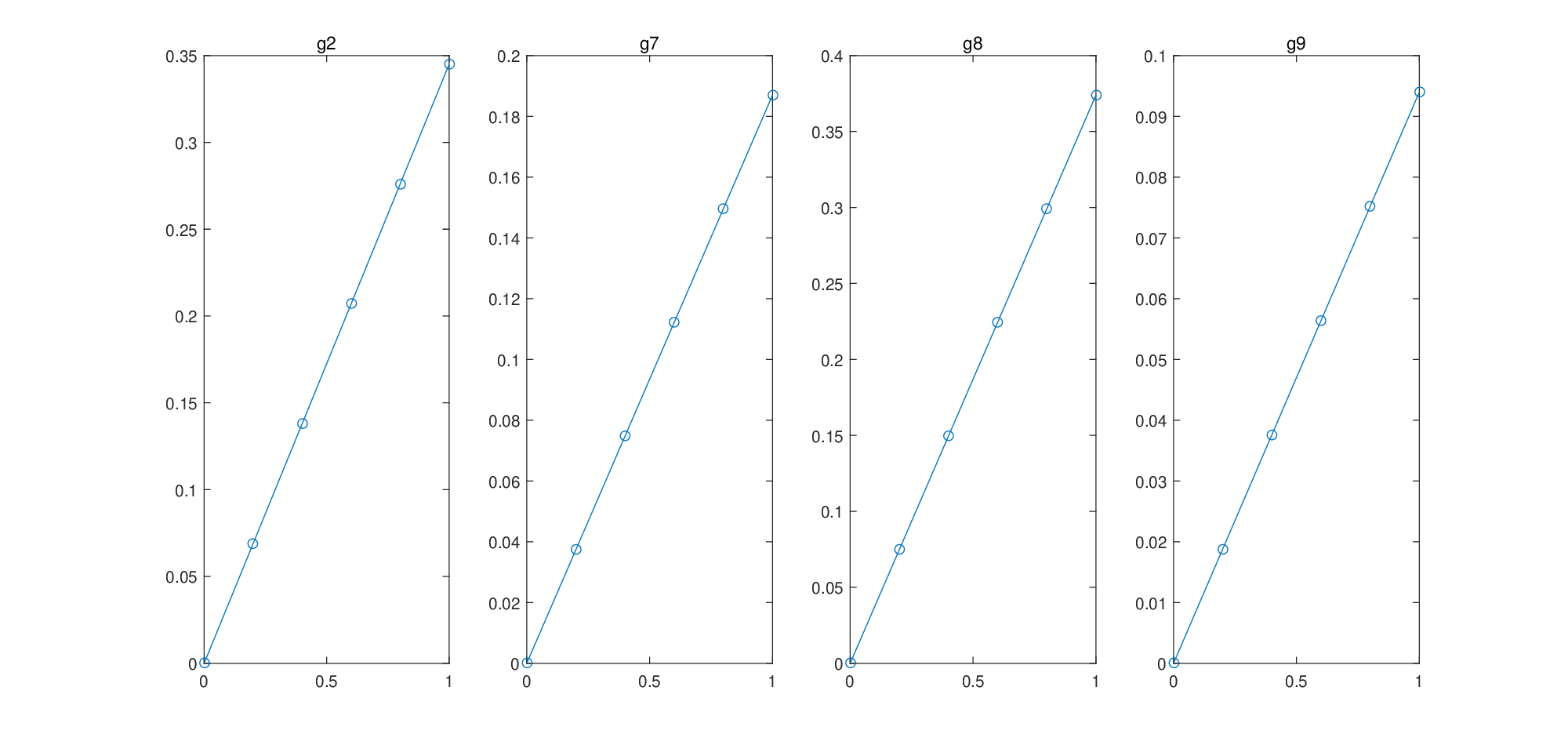}
  \caption{The marginal value functions for $p = 200$}
  \label{p200}
\end{figure}

\begin{table}[!ht]
\centering
\caption{Comprehensive scores of suppliers corresponding to different values of $p$}
\scalebox{0.8}{\begin{tabular}{c|cccccccccc}\toprule
   $p$&	$a_{1}$ & $a_{2}$ & $a_{3}$ & $a_{4}$ & $a_{5}$ & $a_{6}$ & $a_{7}$ & $a_{8}$ & $a_{9}$ & $a_{10}$ \\ \midrule
 $10$ & $0.6074$ & $0.6950$ & $0.3580$ & $0.6302$ & $0.6360$ & $0.3407$ & $0.3507$ & $0.3317$ & $0.3407$ & $0.3307$\\ 
 $100$ & $0.6462$ & $0.7706$ & $0.4920$ & $0.5021$ & $0.7248$ & $0.3292$ & $0.4444$ & $0.3168$ & $0.4344$ & $0.4245$\\ 
 $200$ & $0.5887$ & $0.5824$ & $0.5586$ & $0.5684$ & $0.6577$ & $0.5312$ & $0.5586$ & $0.4845$ & $0.5477$ & $0.5374$\\
   \bottomrule
 \end{tabular}}\label{cridit risk}
\end{table}

\subsection{The relevance of criteria}
Using the method introduced in Section \ref{main-model-2}, all possible streamlined supporting criteria sets are identified, as shown in Table \ref{Streamlined support criteria set}. The largest streamlined supporting criteria sets contain four criteria, suggesting that any larger supporting criteria set necessarily includes unnecessary criteria. Additionally, there are no core or redundant criteria, and any criteria could potentially be relied on by the DM to evaluate these suppliers. The relevance of criteria is shown in Table \ref{The importance of criteria}. It can be seen that the pollution production ($g_{9}$) is likely to be considered by the DM when evaluating suppliers, while the green image ($g_{2}$) and staff environmental training ($g_{10}$) are less likely to be taken into account. When the DM evaluates the suppliers based on four criteria, pollution production ($g_{9}$) can be regarded as a core criterion, while green image ($g_{2}$), resource consumption ($g_{4}$) and staff environmental training ($g_{10}$) are redundant criteria. In addition, the DM is most likely to evaluate suppliers on the basis of environmentally friendly technology ($g_{3}$), green competencies ($g_{5}$), environment management ($g_{6}$) and pollution production ($g_{9}$).
\begin{table}[!ht]
\centering
\caption{Streamlined supporting criteria sets ($4$ breakpoints)}
\scalebox{0.9}{\begin{tabular}{cccc}\toprule
   $\sum\limits_{j=1}^{m}\delta_{j}=2$ &	$\sum\limits_{j=1}^{m}\delta_{j}=3$ &$\sum\limits_{j=1}^{m}\delta_{j}=4$\\ \midrule
$\{g_{2},g_{9}\}$&$\{g_{3},g_{4},g_{9}\}$& $\{g_{3},g_{5},g_{6},g_{9}\}$\\
$\{g_{2},g_{8}\}$&$\{g_{3},g_{4},g_{8}\}$&$\{g_{3},g_{6},g_{8},g_{9}\}$\\
$\{g_{2},g_{5}\}$&$\{g_{3},g_{4},g_{5}\}$&$\{g_{6},g_{7},g_{8},g_{9}\}$\\
&	 $\{g_{4},g_{7},g_{9}\}$ &$\{g_{1},g_{3},g_{5},g_{9}\}$\\
&  $\{g_{1},g_{9},g_{10}\}$&$\{g_{5},g_{6},g_{7},g_{9}\}$\\
& $\{g_{6},g_{9},g_{10}\}$&$\{g_{1},g_{3},g_{6},g_{9}\}$\\
& $\{g_{7},g_{9},g_{10}\}$&$\{g_{1},g_{6},g_{7},g_{9}\}$\\
   \bottomrule
 \end{tabular}}\label{Streamlined support criteria set}
\end{table}
\begin{table}[!ht]
\centering
\caption{The relevance of criteria ($4$ breakpoints)}
\scalebox{0.9}{\begin{tabular}{ccccccccccc}\toprule
   &	$g_{1}$ & $g_{2}$ & $g_{3}$ & $g_{4}$ & $g_{5}$ & $g_{6}$ & $g_{7}$ & $g_{8}$ & $g_{9}$ & $g_{10}$ \\ \midrule
 $R(c_{j})$ & $4$ & $3$ & $7$ & $4$ & $5$ & $7$ & $5$ & $4$ & $13$ & $3$\\
   \bottomrule
 \end{tabular}}\label{The importance of criteria}
\end{table}

The above results are based on the assumption that the evaluation scale of the criteria is equally divided into 5 sub-intervals ($4$ breakpoints). Furthermore, we divide the evaluation scale of the criteria into 4 ($3$ breakpoints) and 6 ($5$ breakpoints) sub-intervals respectively, and redetermine the streamlined supporting criteria set for preference information and the relevance of criteria. Table (\ref{Streamlined support criteria set-4}) $-$ (\ref{The importance of criteria-6}) present the computational results.
\begin{table}[!ht]
\centering
\caption{Streamlined supporting criteria sets ($3$ breakpoints)}
\scalebox{0.9}{\begin{tabular}{cccc}\toprule
   $\sum\limits_{j=1}^{m}\delta_{j}=2$ &	$\sum\limits_{j=1}^{m}\delta_{j}=3$ &$\sum\limits_{j=1}^{m}\delta_{j}=4$\\ \midrule
$\{g_{2},g_{9}\}$&$\{g_{2},g_{5},g_{7}\}$& $\{g_{1},g_{4},g_{9},g_{10}\}$\\
$\{g_{2},g_{8}\}$&$\{g_{2},g_{3},g_{5}\}$&$\{g_{3},g_{6},g_{8},g_{9}\}$\\
&$\{g_{3},g_{4},g_{9}\}$&$\{g_{3},g_{5},g_{6},g_{9}\}$\\
&$\{g_{3},g_{4},g_{5}\}$ &$\{g_{1},g_{3},g_{9},g_{10}\}$\\
&  $\{g_{4},g_{7},g_{9}\}$&$\{g_{1},g_{7},g_{9},g_{10}\}$\\
& $\{g_{3},g_{4},g_{8}\}$&\\
& $\{g_{6},g_{9},g_{10}\}$&\\
   \bottomrule
 \end{tabular}}\label{Streamlined support criteria set-4}
\end{table}
\begin{table}[!ht]
\centering
\caption{The relevance of criteria ($3$ breakpoints)}
\scalebox{0.9}{\begin{tabular}{ccccccccccc}\toprule
   &	$g_{1}$ & $g_{2}$ & $g_{3}$ & $g_{4}$ & $g_{5}$ & $g_{6}$ & $g_{7}$ & $g_{8}$ & $g_{9}$ & $g_{10}$ \\ \midrule
 $R(c_{j})$ & $3$ & $4$ & $7$ & $5$ & $4$ & $3$ & $3$ & $3$ & $9$ & $4$\\
   \bottomrule
 \end{tabular}}\label{The importance of criteria-4}
\end{table}
\begin{table}[!ht]
\centering
\caption{Streamlined supporting criteria sets ($5$ breakpoints)}
\scalebox{0.9}{\begin{tabular}{cccc}\toprule
   $\sum\limits_{j=1}^{m}\delta_{j}=2$ &	$\sum\limits_{j=1}^{m}\delta_{j}=3$ &$\sum\limits_{j=1}^{m}\delta_{j}=4$\\ \midrule
$\{g_{2},g_{9}\}$ & $\{g_{3},g_{4},g_{9}\}$ & $\{g_{1},g_{4},g_{9},g_{10}\}$\\
$\{g_{2},g_{8}\}$ & $\{g_{3},g_{4},g_{5}\}$ & $\{g_{3},g_{5},g_{6},g_{9}\}$\\
$\{g_{2},g_{5}\}$ & $\{g_{4},g_{7},g_{9}\}$ & $\{g_{3},g_{6},g_{8},g_{9}\}$\\
                  & $\{g_{3},g_{4},g_{8}\}$  & $\{g_{5},g_{6},g_{7},g_{9}\}$ \\
                  & $\{g_{6},g_{9},g_{10}\}$& $\{g_{6},g_{7},g_{8},g_{9}\}$\\
                  &                         & $\{g_{1},g_{3},g_{9},g_{10}\}$\\
                  &                         & $\{g_{1},g_{3},g_{5},g_{9}\}$\\
                  &                         & $\{g_{1},g_{3},g_{8},g_{9}\}$\\
                  &                         & $\{g_{1},g_{3},g_{6},g_{9}\}$\\
                  &                         & $\{g_{1},g_{6},g_{7},g_{9}\}$\\
   \bottomrule
 \end{tabular}}\label{Streamlined support criteria set-6}
\end{table}
\begin{table}[!ht]
\centering
\caption{The relevance of criteria ($5$ breakpoints)}
\scalebox{0.9}{\begin{tabular}{ccccccccccc}\toprule
   &	$g_{1}$ & $g_{2}$ & $g_{3}$ & $g_{4}$ & $g_{5}$ & $g_{6}$ & $g_{7}$ & $g_{8}$ & $g_{9}$ & $g_{10}$ \\ \midrule
 $R(c_{j})$ & $6$ & $3$ & $9$ & $5$ & $5$ & $7$ & $4$ & $5$ & $14$ & $3$\\
   \bottomrule
 \end{tabular}}\label{The importance of criteria-6}
\end{table}

By comparing the streamlined supporting criteria sets and relevance of criteria determined by the proposed method under different assumptions regarding the number of breakpoints, we can find that:
\begin{itemize}
  \item The smallest streamlined supporting criteria set contains $2$ criteria and the largest one contains $4$ criteria.
  \item The collections of streamlined supporting criteria sets identified by the proposed method differ, yet there exist numerous common streamlined supporting criteria sets among them.
  \item The criteria most likely to be relied upon by the DM in evaluating alternatives is pollution production ($g_{9}$). In the case where the DM evaluates alternatives based on four criteria, pollution production ($g_{9}$) is the core criterion.
  \item The sum of relevance of all criteria in $\{g_{3},g_{5},g_{6},g_{9}\}$ and  $\{g_{1},g_{3},g_{9},g_{10}\}$ is maximized in the case of $3$ breakpoints, $\{g_{3},g_{5},g_{6},g_{9}\}$ in the case of $4$ breakpoints, and $\{g_{1},g_{3},g_{6},g_{9}\}$ in the case of 5 breakpoints. These streamlined supporting criteria sets are most likely to be relied upon by the DM when evaluating alternative options, and they are highly similar.   
\end{itemize}
In summary, the method proposed in this paper is relatively unaffected by the number of breakpoints when determining the streamlined supporting criteria sets for preference information and relevance of criteria. These streamlined supporting criteria sets and the relevance of criteria reveal some information about the DM's preference system.

\section{Conclusions}\label{conclusion}
In this paper, we developed a novel embedded criteria selection model using preference disaggregation technique and regularization theory. The method aims to infer the criteria used by DMs in evaluating reference alternatives, as well as the value function for aggregated criteria, based solely on a limited amount of decision examples. We introduced 0-1 variables to indicate whether a criterion is selected or not, and reformulated the additive value function to meet the needs of criteria selection. The fitting ability (empirical error) and complexity (generalization error) of value functions are used as quality measures of criteria subsets to guide criteria selection. The interaction between the construction of value functions and criteria selection ensures that the learned value functions possess good performance. Unlike the existing literature, which only considers the degree of deviation of marginal value functions from linearity as the complexity of value functions, we argue that the complexity of a value function is also related to the number of marginal value functions it contains.

The constructed criteria selection model uses the trade-off between empirical error and generalization error as the objective function to identify the criteria that the DM relies on when evaluating alternatives as well as the corresponding value functions. Such value functions have the simplest form while guaranteeing the fitting ability to the preference information. If there exists a value function constructed over a criteria subset with an empirical error of 0, the criteria subset is sufficiently discriminating and is called the supporting criteria set. To find all supporting criteria sets that do not contain any unnecessary criteria (called streamlined supporting criteria set), we employ an iterative approach. The more streamlined supporting criteria sets that contain a criterion, the more likely that the criterion is considered by the DM. Notably, criteria that are included in all streamlined supporting criteria sets are inevitably used by the DM in evaluating alternatives, while criteria that are not included in any streamlined supporting criteria set are definitely not considered.

Such embedded criteria selection method is intuitive and has good interpretability. The use of decision examples as inputs weakens the cognitive effort of the DM. This method enables a constructive interaction with the DM who can explore the learned value function and the selected criteria by modifying or enriching the preference information. This makes the proposed criteria selection method user-friendly. 

\subsection{Computational complexity analysis}
The criteria selection method proposed in this paper is based on solving mixed linear integer programming problems. There are many mature algorithms for such programming problems, and the computational complexity of these algorithms is mainly determined by the number of variables and constraints. Therefore, we primarily analyze the relationship between the number of variables and constraints and the scale of decision problems. Assuming a MCDM problem involves a total of $m$ criteria, the evaluation scales for these criteria are equally divided into $\gamma$ sub-intervals, and the DM has provided $l$ pieces of preference information beforehand. Examining model \ref{regularization-1} (model \ref{regularization-2}) constructed for such a decision problem, we find that it has a total of $m$ 0-1 variables and $m\times\gamma$ continuous variables. Furthermore, besides the monotonicity and normalization conditions, $[SC]$ includes $m\times(\gamma-1)$ constraints, and $E^{A^{R}}$ contains $l$ constraints. After performing a mathematical transformation, model \ref{regularization-1} (model \ref{regularization-2}) is transformed into model \ref{regularization-1*} (model \ref{regularization-2*}), with an additional $m$ variables and $4m\times\gamma$ linear constraints introduced. Therefore, we ultimately need to solve a mixed-integer linear programming problem involving $(2+\gamma)\times m$ (including $m$ 0-1 variables) variables and $(5\gamma-1)\times m+l$ linear constraints (excluding monotonicity conditions, normalization conditions, and constraints indicating the sign of variables). At present, mainstream optimization solvers such as Gurobi, COPT can solve mixed integer programming problems with thousands of variables and constraints in a reasonable amount of time. The actual solving time is related to the performance of computers, the setting of parameters, and the choice of solving strategy. Moreover, if appropriate algorithms and solving strategies are designed based on the characteristics of programming problems, the computation time can be significantly reduced.
\subsection{Research prospects}
In the future, further exploration can be conducted on how the interaction between criteria affects the selection of criteria. In such scenario, the aggregation model should be constructed to satisfy the needs of criteria selection and capture the interactions between criteria. Attention should also be paid to how the interactions between criteria affect the quality of criteria subsets. Moreover, the embedded criteria selection method can be applied within robust ordinal regression where all instances of compatible value functions are used to build necessary and possible preference relations in the set of alternatives. At the same time, the stability of the supporting criteria set (streamlined supporting criteria set) is also a topic worthy of further research.

\section*{CRediT authorship contribution statement}
\textbf{Kun Zhou}: Conceptualization, Methodology, Writing - Original Draft, Writing - Review and Editing, Data Curation, Visualization;
\textbf{Zaiwu Gong}: Conceptualization, Supervision, Funding acquisition, Validation, Formal analysis;
\textbf{Guo Wei}: Writing - Original Draft, Writing - Review \& Editing, Validation, Formal analysis;
\textbf{Roman Słowi\'{n}ski}: Writing - Review \& Editing, Conceptualization, Methodology, Validation.

\section*{Declaration of competing interest}
The authors declare that they have no known competing financial interests or personal relationships that could have appeared to
influence the work reported in this paper.

\section*{Data availability}
No data was used for the research described in the article.

\section*{Acknowledgements}
This work was supported by the National Natural Science Foundation of China (Grant numbers 72371137). Roman S{\l}owi\'{n}ski wishes to acknowledge the support of the National Research Council (NCN) for the MAESTRO project no. 2023/50/A/HS4/00499 on "Intelligent decision support based on explanatory analytics of preference data".






\appendix
\section{The proof of Proposition \ref{gg}}\label{A1}
\textbf{Proposition} \ref{gg} Given two criteria subsets, $\mathcal{C}_{k}$ and $\mathcal{C}_{t}$, satisfying $\mathcal{C}_{k}\neq\emptyset$ and $\mathcal{C}_{k}\subseteq\mathcal{C}_{t}$. If $\mathcal{C}_{k}$ is a supporting criteria set for preference information, $\mathcal{C}_{t}$ is also a supporting criteria set.
\begin{proof}
If $\mathcal{C}_{k}=\mathcal{C}_{t}$, the proposition is obviously true. Otherwise, $\mathcal{C}_{k}$ is a proper subset of $\mathcal{C}_{t}$. According to Definition \ref{supporting}, there exists a $\mathcal{C}_{k}$-based value function $u(\cdot)=\sum\limits_{\{j|c_{j}\in \mathcal{C}_{k}\}}\textbf{V}_{j}^{\textrm{T}}\times \textbf{A}_{j}(\cdot)$ that can restore the preference information. A $\mathcal{C}_{t}$-based value function $u^{\star}(\cdot)=\sum\limits_{\{j|c_{j}\in \mathcal{C}_{t}\}}(\textbf{V}^{\star}_{j})^{\textrm{T}}\times \textbf{A}_{j}(\cdot)$ is constructed, satisfying the following two conditions:
\begin{enumerate}
  \item[(1)] For $p\in \{j|c_{j}\in \mathcal{C}_{k}\}$, $\textbf{V}^{\star}_{p}=\textbf{V}_{p}$;
  \item[(2)] For $p\in \{j|c_{j}\in \mathcal{C}_{t}\setminus\mathcal{C}_{k}\}$, $\textbf{V}^{\star}_{p}=\textbf{0}$.
\end{enumerate}
It is easy to see that $u^{\star}(\cdot)$ satisfies the monotonicity and normalization conditions. For any $a_{i}\in \mathcal{A}$, we have $u^{\star}(a_{i})=u(a_{i})$. Since $u(\cdot)$ can restore the DM's preference information, $u^{\star}(\cdot)$ can also achieve this. This shows that $\mathcal{C}_{t}$ is a supporting criteria set.
\end{proof}

\section{The proof of Proposition \ref{main-proof}}\label{A2}
\textbf{Proposition} \ref{main-proof} Model (\ref{regularization-1}) and model (\ref{regularization-1*}) are equivalent.
\begin{proof}
To prove the equivalence of these optimization models, it is sufficient to show that given a feasible solution to one model, a corresponding feasible solution to the other model can be found, and the respective objective function values are identical. The proof consists of two parts: \\ a) Assume that $\overline{\varphi},\ \ \{\overline{\textbf{V}_{j}^{\textrm{T}}},\ j\in M\},\ \{\overline{\delta_{j}},\ j\in M\},\ \{\overline{\sigma_{+}(a_{i})}, \ \overline{\sigma_{-}(a_{i})},\ a_{i}\in A^{R}\}$ is a feasible solution to model (\ref{regularization-1}) with the objective function value of $\sum\limits_{a_{i}\in A^{R}}\left(\overline{\sigma_{+}(a_{i})}+\overline{\sigma_{-}(a_{i})}\right)+C\times\left(m\times\overline{\varphi}+\sum\limits_{j=1}^{m}\overline{\delta_{j}}
\right)$. Given $j\in M$, for any $s=1,2,\cdots,\lambda_{j}$, if $\delta_{j}=0$, $\overline{z_{j}^{s}}=0$, otherwise $\overline{z_{j}^{s}}=\overline{\triangle u_{j}^{s}}$. It is evident that $\{\overline{\textbf{Z}_{j}^{\textrm{T}}},\ j\in M\}$, $\{\overline{\delta_{j}},\ j\in M\}$ and $\{\overline{\textbf{V}_{j}^{\textrm{T}}},\ j\in M\}$ satisfy constraints $[LC]$. Furthermore, since
$\overline{\textbf{Z}_{j}^{\textrm{T}}}\times \textbf{A}_{j}(a_{i})=\delta_{j}\times \overline{\textbf{V}_{j}^{\textrm{T}}}\times \textbf{A}_{j}(a_{i})$, it is straightforward to verify that solution $\overline{\varphi},\ \{\overline{\textbf{V}_{j}^{\textrm{T}}},\ j\in M\},\{\overline{\delta_{j}},\ j\in M\},\{\overline{\sigma_{+}(a_{i})},\overline{\sigma_{-}(a_{i})},\ a_{i}\in A^{R}\},\{\overline{\textbf{Z}^{\textrm{T}}_{j}},\ j\in M\}$ satisfies the constraints $[MC]$,$[NC]$,$[SC]$ and $E^{A^{R}}$ of model (\ref{regularization-1*}). Therefore, it is a feasible solution to model (\ref{regularization-1*}), and the corresponding objective function value is also $\sum\limits_{a_{i}\in A^{R}}\left(\overline{\sigma_{+}(a_{i})}+\overline{\sigma_{-}(a_{i})}\right)+C\left(m\times\overline{\varphi}+\sum\limits_{j=1}^{m}\overline{\delta_{j}}
\right)$.\\
b) Conversely, assume that $\overline{\varphi},\ \{\overline{\textbf{V}_{j}^{\textrm{T}}},\ j\in M\},\{\overline{\delta_{j}},\ j\in M\},\{\overline{\sigma_{+}(a_{i})},\overline{\sigma_{-}(a_{i})},\ a_{i}\in A^{R}\},\{\overline{\textbf{Z}^{\textrm{T}}_{j}}, j\in M\}$ is a feasible solution to model (\ref{regularization-1*}). According to constraints $[LC]$, we have $\overline{\textbf{Z}_{j}^{\textrm{T}}}\times \textbf{A}_{j}(a_{i})=\delta_{j}\times \overline{\textbf{V}_{j}^{\textrm{T}}}\times \textbf{A}_{j}(a_{i})$. It is easy to verify that $\overline{\varphi},\ \{\overline{\textbf{V}_{j}^{\textrm{T}}},\ j\in M\},\ \{\overline{\delta_{j}},\ j\in M\},\ \{\overline{\sigma_{+}(a_{i})},\ \overline{\sigma_{-}(a_{i})},\ a_{i}\in A^{R}\}$ satisfy all constraints of model (\ref{regularization-1}), and the corresponding objective function values for both models are $\sum\limits_{a_{i}\in A^{R}}\left(\overline{\sigma_{+}(a_{i})}+\overline{\sigma_{-}(a_{i})}\right)+C\times\left(m\times\overline{\varphi}+\sum\limits_{j=1}^{m}\overline{\delta_{j}}
\right)$.\\
In summary, model (\ref{regularization-1}) and model (\ref{regularization-1*}) are equivalent.
\end{proof}

\section{The proof of Proposition \ref{well-define}}\label{A3}
\textbf{Proposition} \ref{well-define} The value function determined by model (\ref{regularization-1*}) is non-degenerate.
\begin{proof}
Assume that $\overline{\varphi},\ \{\overline{\textbf{V}_{j}^{\textrm{T}}},\ j\in M\},\ \{\overline{\delta_{j}},\ j\in M\},\ \{\overline{\sigma_{+}(a_{i})},\ \overline{\sigma_{-}(a_{i})},\ a_{i}\in A^{R}\}$, $\{\overline{\textbf{Z}^{\textrm{T}}_{j}}, j\in M\}$ is the optimal solution to model (\ref{regularization-1*}) with the optimal value $O^{\ast}$. Then, $\{c_{j}|\overline{\delta_{j}}=1\}$ contains the selected criteria, and the corresponding value function is $u(\cdot)=\sum\limits_{\{c_{j}|\overline{\delta_{j}}=1\}}\overline{\textbf{V}_{j}^{\textrm{T}}}\times \textbf{A}_{j}(\cdot)$. If $u(\cdot)$ is degenerate, there exists $c_{k}\in \{c_{j}|\overline{\delta_{j}}=1\}$ such that $\overline{\textbf{V}_{k}^{\textrm{T}}}=\textbf{0}$. Let $\overline{\delta_{k}}=0$ and keep the other decision variables unchanged. Then it is easy to verify that this new instance of decision variables satisfies all constraints of model (\ref{regularization-1*}),  and the corresponding objective function value is $O^{\ast}-1<O^{\ast}$. This implies that $\overline{\varphi},\ \{\overline{\textbf{V}_{j}^{\textrm{T}}},\ j\in M\},\ \{\overline{\delta_{j}},\ j\in M\},\ \{\overline{\sigma_{+}(a_{i})},\overline{\sigma_{-}(a_{i})},\ a_{i}\in A^{R}\},\ \{\overline{\textbf{Z}^{\textrm{T}}_{j}},\ j\in M\}$ is not the optimal solution to model (\ref{regularization-1*}), contradicting the initial assumption. Therefore, $u(\cdot)=\sum\limits_{\{j|\overline{\delta_{j}}=1\}}\overline{\textbf{V}_{j}^{\textrm{T}}}\times \textbf{A}_{j}(\cdot)$ is non-degenerate.
\end{proof}

\section{The proof of Proposition \ref{reduction-set}}\label{A4}
\textbf{Proposition} \ref{reduction-set} The supporting criteria sets determined by the above method are all streamlined.
\begin{proof}
Recall that in this method, whenever a supporting criteria set $\mathcal{C}_{k}$ is identified, the constraint $\sum\limits_{\{j|c_{j}\in \mathcal{C}_{k}\}}\delta_{j}\leq|\mathcal{C}_{k}|-1$ is added, and model (\ref{regularization-2*}) is re-solved to find the next supporting criteria set. Due to the existence of constraint $\sum\limits_{\{j|c_{j}\in \mathcal{C}_{k}\}}\delta_{j}\leq|\mathcal{C}_{k}|-1$, newly identified supporting criteria set cannot contain $\mathcal{C}_{k}$. Therefore, the supporting criteria sets determined by this method are all streamlined.
\end{proof}

\end{document}